\newcommand{\fC}{\mathfrak{C}}
\newcommand{\bQ}{\mathbb{Q}}
\newcommand{\bA}{\mathbf{A}} 
\newcommand{\bB}{\mathbf{B}}
\newcommand{\bU}{\mathbf{U}}
\newcommand{\bN}{\mathbb{N}}
\newcommand{\bT}{\mathbf{T}}
\newcommand{\bC}{\mathbf{C}}
\newcommand{\bD}{\mathbf{D}}
\newcommand{\cC}{\mathcal{C}}
\newcommand{\cU}{\mathcal{U}}
\newcommand{\cK}{\mathcal{K}}
\newcommand{\restr}{\mathord{\upharpoonright}}
\newcommand{\ta}{{\bar{a}}}
\newcommand{\tb}{{\bar{b}}}
\newcommand{\tc}{{\bar{c}}}
\newcommand{\uSigma}{\underline{\Sigma}}
\newcommand{\JEP}{\ensuremath{\operatorname{JEP}}\xspace}
\newcommand{\AP}{\ensuremath{\operatorname{AP}}\xspace}
\newcommand{\HP}{\ensuremath{\operatorname{HP}}\xspace}
\newcommand{\HAP}{\ensuremath{\operatorname{HAP}}\xspace}
\newcommand{\AEP}{\ensuremath{\operatorname{AEP}}\xspace}
\newcommand{\Met}{\operatorname{Met}}
\newcommand{\cMet}{\operatorname{cMet}}
\newcommand{\Age}{\operatorname{Age}}
\newcommand{\Emb}{\operatorname{Emb}}
\newcommand{\End}{\operatorname{End}}
\newcommand{\Aut}{\operatorname{Aut}}
\newcommand{\ar}{\operatorname{ar}}
\newcommand{\Fraisse}{Fra\"\i{}ss\'e}
\newcommand{\injto}{\hookrightarrow}
\newcommand{\monoid}{\mathfrak}
\newcommand{\metsp}{\mathbb}
\theoremstyle{plain} \newtheorem{theorem}{Theorem}[section]
\newtheorem{proposition}[theorem]{Proposition}
\newtheorem{lemma}[theorem]{Lemma}
\newtheorem{corollary}[theorem]{Corollary} 
\theoremstyle{definition}
\newtheorem{definition}[theorem]{Definition}
\newtheorem*{problem}{Problem} 
\newtheorem{example}[theorem]{Example}
\theoremstyle{remark}
\newtheorem*{remark}{Remark}
\author[Ch.\,Pech]{Christian Pech}
\address{Institute of Algebra\\ TU Dresden}
\email{cpech@freenet.de}
\urladdr{http://www.math.tu-dresden.de/~pech}
\author[M.\,Pech]{Maja Pech} \address{Department of
  Mathematics\\University of Novi Sad} \curraddr{Institute of
  Algebra\\ TU Dresden}
\email{maja@dmi.uns.ac.rs, Maja.Pech@tu-dresden.de}
\urladdr{http://sites.dmi.rs/personal/pechm/}
\title[On  automatic homeomorphicity\dots]{On automatic homeomorphicity for transformation monoids}
\subjclass[2010]{Primary: 08A35; Secondary: 54H15, 03C15, 03C50}
\keywords{transformation monoid, topological monoid, reconstruction, homogeneous structure, automatic homeomorphicity, small index property}
\begin{document}

\begin{abstract}
    Transformation monoids carry a canonical topology --- the topology of point-wise convergence. A closed transformation monoid $\monoid{M}$ is said to have automatic homeomorphicity with respect to a class  $\cK$ of structures, if every monoid-isomorphism of $\monoid{M}$ to the endomorphism monoid of  a member of $\cK$ is automatically a homeomorphism. In this paper we show automatic homeomorphicity-properties for the monoid of non-decreasing functions on the rationals, the monoid of non-expansive functions on the Urysohn space and the endomorphism-monoid of the countable universal homogeneous poset. 
\end{abstract}

\maketitle

A major question in mathematics is to what extent the symmetries of a structure determine its properties (algebraic, geometric,\dots).  Of course the answer to this question depends strongly on the decision, what we consider to be a structure. For geometries this is essentially Felix Klein's Erlangen Program. However, if we take a much broader point of view and consider model theoretic structures, then the answer is that the automorphism group in general says little about the properties of a structure. Indeed, in some sense ``most'' structures have no nontrivial symmetries. 

This situation changes if we restrict the class of structures in question. For instance, by the Ryll-Nardzewski Theorem, a countable structure is $\omega$-categorical (i.e., determined among other countable structures up to isomorphism by its elementary theory) if and only if its automorphism group is oligomorphic (i.e., it has finitely many $k$-orbits for every $k\in\bN\setminus\{0\}$.). More or less a direct consequence of this theorem is that if $\bA$ is a countable $\omega$-categorical structure and if $\bB$ is a countable structure (possibly of different type than $\bA$), then  $\Aut(\bA)$ and $\Aut(\bB)$ are isomorphic as permutation groups if and only if $\bA$ and $\bB$ are first-order interdefinable. 

Every permutation  group can be endowed with a natural topology --- the topology of pointwise convergence---under which the  group operations are continuous. It was shown by Coquand, Ahlbrandt and Ziegler (cf. \cite{AhlZie86}) that two countable $\omega$-categorical structures are first order bi-interpretable if and only if their automorphism groups are isomorphic as topological groups. 

In \cite{DixNeuTho86} it was shown by Dixon, Neumann and Thomas that in the full symmetric  group $\monoid{S}_\bN$ on $\bN$ every subgroup of index less that $2^{\aleph_0}$ is open. In general, a countable structure $\bA$ is said to have the \emph{small index property}  if every subgroup of index less than $2^{\aleph_0}$ in $\Aut(\bA)$ is open. Thus, the above mentioned result says that the countable structure over the empty signature has the small index property. 

The small index property has strong consequences. Whenever $\bA$ and $\bB$ are countable structures, such that $\bA$ has the small index property, then every group-isomorphism from  $\Aut(\bA)$ to $\Aut(\bB)$ is continuous. By a result by Lascar \cite[Corollary 2.8]{Las91}, every continuous isomorphism between the automorphism groups of countable structures is already a homeomorphism. It follows that whenever $\bA$ and $\bB$ are countable structures and $\bA$ has the small index property then every group-isomorphism between $\Aut(\bA)$ and $\Aut(\bB)$ is a homeomorphism. Summing up, a countable structure with the small index property is determined among all other countable structures by its automorphism group (considered as abstract group) up to first order bi-interpretability. 

This observation has been spurring  the interest into  structures with the small index property.  A few corner-stones in the research about the small index property include \cite{DixNeuTho86,Her98,HerLas00,HodHodLasShe93,Hru92,KecRos07,Sol05,Tru89}. 

Another, rather different, approach to the reconstruction of $\omega$-cate\-gorical structures is due to Rubin \cite{Rub94} --- based on (weak) $\forall\exists$-interpre\-tations. It would go too far to describe this method at this place. However, if a countable $\omega$-categorical structure $\bA$ has a weak $\forall\exists$-interpretation and if $\bB$ is another $\omega$-categorical structure, then every isomorphism between the automorphism groups of $\bA$ and $\bB$ is a homeomorphism (cf. also \cite{BarPhD,Bar07}). Thus, a countable, $\omega$-categorical structure with a weak $\forall\exists$-interpretation  is determined among the $\omega$-categorical structures by its automorphism group (considered as an abstract group) up to first order bi-interpretability. 

The automorphism groups of first order structures have been the topic of intensive research. Much less is known about their endomorphism monoids. This situation is slowly changing as is witnessed by the papers \cite{BonDelDol10,DelDol04,Dol12,Dol14,Kub13,LocTru12,MalMitRus09,PecPec13a}, that deal with such diverse topics like the Bergman property, cofinality and strong cofinality, universality, idempotents, generic elements, and ideals of the endomorphism monoids of countable homogeneous structures.  

In this paper we will study the question, how much information about a relational structure can be recovered from its endomorphism monoid (considered as an abstract monoid). In particular, inspired by the group case and by a recent paper by Bodirsky, Pinsker and Pongr\'acz \cite{BodPinPon13}, we study when the endomorphism monoid of a relational structure $\bA$ has the property that every monoid isomorphism to the endomorphism monoid of a structure $\bB$ from a given class $\cK$ is automatically already a homeomorphism (here, the topology in question is always the canonical topology of pointwise convergence). Using the terminology of \cite{BodPinPon13}, this means, that $\End(\bA)$ has \emph{automatic homeomorphicity with respect to $\cK$}. 

In \cite{BodPinPon13} automatic homeomorphicity was shown for 
\begin{itemize}
	\item the monoid of injective functions on $\bN$,
	\item the full transformation monoid on $\bN$,
	\item the monoid of homomorphic self-embeddings of the Rado graph, and 
	\item the endomorphism monoid of the Rado graph,
	\item the monoid of homomorphic self-embeddings of the countable universal homogeneous digraph.
\end{itemize}
Moreover, automatic homeomorphicity with respect to the class of countable $\omega$-categorical structures was shown for 
\begin{itemize}
    \item the endomorphism monoid of the countable universal homogeneous tournament,
    \item the monoid of homomorphic self-embeddings of the countable universal homogeneous $k$-uniform hypergraph.
\end{itemize}
Recently we learnt that Truss, Vergas-Garc\'{\i}a \cite{TruVarPC} and Hyde \cite{HydPC} showed independently the automatic homeomorphicity for the endomorphism monoid of $(\bQ,<)$. 

We will extend this list by proving that the endomorphism monoid of the rationals $(\bQ,\le)$, the endomorphism monoid of the countable universal homogeneous poset  $(\mathbb{P},\le)$, and the monoid of non-expansive self-maps of the rational Urysohn-space all have automatic homeomorphicity with respect to the class of countable structures whose endomorphism  monoids have  just finitely many weak orbits (in the sense of \cite{Ste10}), cf.\ Theorem~\ref{reconQ}, Theorem~\ref{reconP}, and Theorem~\ref{reconU}.  

\section{Preliminaries}

\subsection{Transformation monoids}
For a set $A$, the set of all function from $A$ to $A$, equipped with composition of functions, forms a monoid $\monoid{T}_A$. The submonoids of $\monoid{T}_A$ are called \emph{transformation monoids on $A$}. The monoid $\monoid{T}_A$ is also called the \emph{full transformation monoid on $A$}. 

The submonoid of $\monoid{T}_A$ that consists of all permutations on $A$ is called the \emph{full symmetric group} on $A$. It will be denoted by $\monoid{S}_A$.

If we equip $A$ with the discrete topology, then $\monoid{T}_A$ is a product space of $A$. Thus, it is canonically equipped with the Tychonoff topology (a.k.a. the topology of pointwise convergence). For every $h\in \monoid{T}_A$, and for every finite subset $M$ of $A$, we consider the set 
\[\Phi_{h,M}:=\{f\in \monoid{T}_A\mid f\restr_M=h\restr_M\}.\]
Then 
\[\{\Phi_{h,M}\mid h\in \monoid{T}_A,\,M\subseteq A\text{ finite}\}.\]
forms a basis of the Tychonoff topology on $\monoid{T}_A$. Moreover, every transformation monoid on $A$ is canonically equipped with the corresponding subspace topology.
 
If $A$ is countably infinite, then the topology on $\monoid{T}_A$ is metrizable by an ultrametric. In particular, if $\ta=(a_i)_{i\in\omega}$ is an enumeration of $A$, then we consider the function
\begin{gather*}
  D_{\ta}\colon \monoid{T}_A\times \monoid{T}_A\to\omega^+\\ (f,g)\mapsto 
  \begin{cases} \min\{i\in\omega\mid f(a_i)\neq g(a_i)\} & f\neq g\\
    \omega & f=g.
  \end{cases}
\end{gather*}
Finally, the mentioned ultrametric on $\monoid{T}_A$ is given by
\[
d_\ta(f,g):= \begin{cases} 2^{-D_\ta(f,g)} & f\neq g\\
  0 & f=g,
\end{cases}
\]
for all $f,g\in \monoid{T}_A$.
     
It is easy to see that for every enumeration $\ta$ of $A$ and for all $f,g,h\in \monoid{T}_A$ we have $d_\ta(f,g)\le d_\ta(h\circ f, h\circ g)$, and that equality holds if $h$ is injective. In other words,  the metric $d_\ta$ is \emph{left-$\monoid{T}_A$-subinvariant}. 

In the following, whenever we deal with a transformation monoid $\monoid{M}\le \monoid{T}_A$, we implicitly consider it to be equipped with the topology of pointwise convergence. 

\subsection{Relational structures}
A \emph{relational signature} is a pair $\underline\Sigma=(\Sigma,\ar)$ where $\Sigma$ is a set of \emph{relational symbols} and $\ar\colon \Sigma\to\bN\setminus\{0\}$ assigns to each  relational symbol its \emph{arity}. With $\Sigma^{(n)}$ we will denote the set of all $n$-ary relational symbols in $\Sigma$.

A \emph{$\uSigma$-structure} $\bA$ is a pair $(A,(\varrho^\bA)_{\varrho\in\Sigma})$, such that $A$ is a set, and such that for each $\varrho\in\Sigma$ we have that $\varrho^\bA$ is a relation of arity $\ar(\varrho)$ on $A$.  The set $A$ will be called the \emph{carrier} of $\bA$ and the relations $\varrho^\bA$ will be called the \emph{basic relations} of $\bA$. If the signature $\uSigma$ is of no importance, we will speak only about relational structures. If not said otherwise the carrier of a $\uSigma$-structure $\bA$ will always be denoted by $A$ and the basic relations of $\bA$ will be denoted by $\varrho^\bA$ for each $\varrho\in\Sigma$.

Let $\bA$ and $\bB$ be $\uSigma$-structures. A function $h\colon A\to B$ is called a \emph{homomorphism} if for all $n\in\bN\setminus\{0\}$, for all $\varrho\in\Sigma^{(n)}$ and for all $\ta=(a_1,\dots,a_n)\in \varrho^\bA$ we have that $h(\ta):=(h(a_1),\dots,h(a_n))\in\varrho^\bB$. A function $h\colon A\to B$ is called \emph{embedding} if $h$ is injective and if for all $n\in\bN\setminus\{0\}$, for all $\varrho\in\Sigma^{(n)}$ and for all $\ta\in A^n$ we have 
\[ \ta\in\varrho^\bA \iff h(\ta)\in\varrho^\bB.\]
Surjective embeddings are called \emph{isomorphisms}. As usual, isomorphisms of a relational structure $\bA$ onto itself are called \emph{automorphisms}, and homomorphisms of $\bA$ to itself are called \emph{endomorphisms}. Moreover, embeddings of $\bA$ into itself will be called \emph{selfembeddings} of $\bA$. The set of all automorphisms, endomorphisms, and selfembeddings of $\bA$ will be denoted by $\Aut(\bA)$, $\End(\bA)$, and $\Emb(\bA)$, respectively. Clearly, $\Aut(\bA)$ is a permutation group, and $\End(\bA)$ and $\Emb(\bA)$ are transformation monoids.

Another word about notation: Whenever we write $h\colon \bA\to\bB$, we mean that $h$ is a homomorphism from $\bA$ to $\bB$. Moreover, with $h\colon \bA\injto\bB$ we denote the fact that $h$ is an embedding from $\bA$ into $\bB$. 

\begin{example}
    Consider the relational signature $\uSigma_M$ that contains for every $r\in\bQ^+\cup\{0\}$ a binary relational symbol $\varrho_r$. Then to every metric space $(A,d)$ we may associate a $\Sigma_M$-structures $\bA$ by defining 
\[
    \varrho_r^\bA :=\{(x,y)\in A^2\mid d(x,y)\le r\},
\]
for every $r\in\bQ^+\cup\{0\}$.  The metric $d$ can be reconstructed from $\bA$ by
\[
    d(x,y)=\inf\{r\in\bQ^+\cup\{0\}\mid (x,y)\in\varrho_r^\bA\}.
\]
To make this correspondence functorial, the proper choice  of morphisms between metric spaces are the non-expansive maps. Recall that a function $f\colon (A,d_A)\to (B,d_B)$ is called \emph{non-expansive} if for all $x,y\in A$ we have
\[ d_B(f(x),f(y))\le d_A(x,y).\]
With this definition of morphisms between metric spaces, the assignment $R\colon (A,d)\mapsto\bA$, $R\colon f\mapsto f$ is a full embedding into the category $\fC_{\uSigma_M}$ of all $\uSigma_M$-structures with homomorphisms as morphisms. Therefore, in the following we will identify metric spaces with their relational counter-parts. 
\end{example}

\subsection{Homogeneous relational structures}
Following \Fraisse, for every $\uSigma$-structure $\bA$, its age is the class of  of all finite $\uSigma$-structures that are embeddable into $\bA$. It will be denoted by $\Age(\bA)$. A $\uSigma$-structure $\bB$ is called \emph{younger} than $\bA$ if $\Age(\bB)\subseteq \Age(\bA)$. By $\overline{\Age(\bA)}$ we will denote the class of all countable $\uSigma$-structures younger than $\bA$. 

\begin{definition}
    A countable $\uSigma$-structure $\bA$. is called \emph{universal} if every structure from $\overline{\Age(\bA)}$ can be embedded into $\bA$. It is called \emph{homogeneous} if for every $\bB\in\Age(\bA)$ and for all embeddings $\iota_1,\iota_2\colon \bB\injto\bA$ there exists an automorphism $h$ of $\bA$ such that $\iota_2=h\circ \iota_1$. 
\end{definition}
\begin{remark}
    Our definition of homogeneity is equivalent to the more usual definition that every isomorphism between finite substructures of $\bA$ extends to an automorphism. Indeed, $\iota_1$ and $\iota_2$ mark two isomorphic copies of $\bB$ in $\bA$, and at the same time define an isomorphism between these two finite substructures given by $g\colon \iota_1(\bB)\to\iota_2(\bB)\colon  x\mapsto \iota_2(\iota_1^{-1}(x))$. Finally the postulated automorphism $h$ extends $g$. On the other hand, every isomorphism $g$ between finite substructures $\bB_1$ and $\bB_2$  of $\bA$ defines two embeddings $\iota_1\colon \bB_1\injto\bA$ and $\iota_2\colon \bB_2\injto\bA$, where $\iota_1$ is the identical embedding and $\iota_2=g\circ\iota_1$. Then every automorphism $h$ of $\bA$ that extends $g$ will satisfy $\iota_2=h\circ\iota_1$.     
\end{remark}

\begin{definition}
    Let $\cC$ be a class of $\uSigma$-structures. We say that $\cC$ has the 
    \begin{description}
		\item[hereditary property (\HP)]if whenever $\bA\in\cC$ and $\bB$ is a $\uSigma$-structure embeddable into $\bA$, then also $\bB\in\cC$,
  		\item[joint embedding property (\JEP)] if for all  $\bA,\bB\in\cC$ there exists a $\bC\in\cC$ and embeddings $f\colon \bA\injto \bC$ and $g\colon \bB\injto \bC$,
      	\item[amalgamation property (\AP)] if for all $\bA$, $\bB$, $\bC$  from $\cC$ and for all embeddings $f\colon \bA\injto \bB$, $g\colon \bA\injto \bC$, there exists $\bD\in\cC$  and embeddings $\hat{f}\colon \bC\injto \bD$, $\hat{g}\colon \bB\injto \bD$ such that the following  diagram commutes:
  \[
  \begin{tikzcd}
    \bC \rar[hook,dashed]{\hat{f}}& \bD\\
    \bA  \uar[hook]{g}\rar[hook]{f}&  \bB\uar[hook,dashed]{\hat{g}}.
  \end{tikzcd}
  \]
  \end{description}
\end{definition}
Let us recall the well-known characterization of ages of countable structures and, in particular,  of countable   homogeneous structures, by Roland \Fraisse:
\begin{theorem}[\Fraisse{} (\cite{Fra53})]
    Let $\cC$ be a class of finite $\uSigma$-structures. Then $\cC$ is equal to the age of a countable structure if and only if 
it has up to isomorphism just countably many members, and  it has the \HP and the \JEP. 
     Moreover, $\cC$ is equal to the age of a countable homogeneous structure if and only if it has in addition the \AP. Finally, any two countable homogeneous $\uSigma$-structures with the same age are isomorphic. 
\end{theorem}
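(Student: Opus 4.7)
The plan is to prove the three assertions in sequence: the age characterization, the homogeneous refinement, and finally uniqueness.

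For the first equivalence, the forward direction is routine. If $\cC = \Age(\bA)$ with $\bA$ countable, then \HP holds because embeddings compose, \JEP holds by passing to the finite substructure of $\bA$ generated by two prescribed embedded copies, and $\cC$ has countably many isomorphism types since $\bA$ has only countably many finite substructures. For the backward direction, enumerate representatives $\bC_0, \bC_1, \ldots$ of the isomorphism types of $\cC$ and inductively build a chain $\bA_0 \injto \bA_1 \injto \cdots$ with $\bA_0 = \bC_0$, where at step $n+1$ one uses \JEP to find $\bA_{n+1} \in \cC$ admitting embeddings of both $\bA_n$ and $\bC_{n+1}$. The countable colimit $\bA$ then contains each $\bC_n$ as a substructure, and by \HP every finite substructure of $\bA$ sits inside some $\bA_n$, hence lies in $\cC$; so $\Age(\bA) = \cC$.

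For the second equivalence, the forward direction: given a homogeneous $\bD$ with $\Age(\bD) = \cC$ and embeddings $f\colon \bA \injto \bB$, $g\colon \bA \injto \bC$ in $\cC$, choose embeddings $\iota_\bB\colon \bB \injto \bD$ and $\iota_\bC\colon \bC \injto \bD$. The compositions $\iota_\bB \circ f$ and $\iota_\bC \circ g$ are two embeddings of $\bA$ into $\bD$, so homogeneity yields an automorphism $h$ of $\bD$ satisfying $h \circ \iota_\bB \circ f = \iota_\bC \circ g$, and the finite substructure of $\bD$ on $(h \circ \iota_\bB)(B) \cup \iota_\bC(C)$ is the desired amalgam. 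The converse is the standard \Fraisse{} construction: enumerate all one-point extensions $\bB \injto \bE$ from $\cC$ (countably many up to isomorphism), and during the chain construction maintain a bookkeeping queue so that whenever an embedding $\bB \injto \bA_n$ has been produced, a later step uses \AP to extend $\bA_n$ realizing the amalgamation with $\bE$. The resulting union $\bA$ has age $\cC$ by the argument from the first part and is homogeneous: given an isomorphism $g\colon \bB_1 \to \bB_2$ between finite substructures, one builds an automorphism of $\bA$ extending $g$ by a back-and-forth, at each step using \AP (inside $\cC$) together with the guarantee that all one-point extensions are eventually realized.

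Uniqueness is then a two-sided back-and-forth. Fix enumerations of the carriers of $\bA$ and $\bA'$ and construct an increasing sequence of finite partial isomorphisms $h_0 \subseteq h_1 \subseteq \cdots$, alternately forcing the next element of $\bA$ into a domain and the next element of $\bA'$ into an image. The extension step from $h_n$ to $h_{n+1}$ is possible because the enlarged finite substructure on one side belongs to the common age and hence embeds on the other side, whereupon homogeneity of the target structure repositions that embedding to extend $h_n$.

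The main obstacle is the homogeneous construction: the bookkeeping must be organized so that \emph{every} embedding of \emph{every} finite $\bB \in \cC$ into \emph{every} $\bA_n$ eventually gets amalgamated with \emph{every} extension $\bB \injto \bE$ in $\cC$, while keeping each $\bA_n$ in $\cC$. Once this is arranged, homogeneity of the colimit follows from a single one-sided back-and-forth, and the uniqueness statement reduces to the now-standard two-sided back-and-forth above.
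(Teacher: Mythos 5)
Your proposal is correct and follows the canonical Fra\"\i{}ss\'e argument (chain construction via \JEP/\HP, the extension-property bookkeeping for the homogeneous limit, and back-and-forth for uniqueness); the paper itself gives no proof, citing Fra\"\i{}ss\'e's original 1953 paper, so there is no divergence to report. The only point to keep straight in a full write-up is the dovetailing you already flag: every embedding of every finite member into every stage must eventually meet every one-point extension, which is exactly the standard bookkeeping.
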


\begin{definition}
    A class of finite $\uSigma$-structures is called an \emph{age} if it has the \HP, the \JEP, and if it contains up to isomorphism just countably many structures. An age is called a \emph{\Fraisse-class} if it has the \AP. A countable homogeneous $\uSigma$-structure $\bU$ is called the \emph{\Fraisse-limit} of $\Age(\bU)$. 
\end{definition}

\begin{example}\label{exFraisse}
    Some examples of \Fraisse-classes include:
    \begin{itemize}
        \item the class of finite simple graphs,
        \item the class of finite posets (strictly or non-strictly ordered),
        \item the class of finite linear orders (strictly or non-strictly ordered)
        \item the class of finite metric spaces with rational distances,
        \item the class of finite metric spaces with rational distances $\le 1$, 
        \item the class of finite tournaments.
    \end{itemize}
    The corresponding \Fraisse-limits are the Rado graph (aka.\ the countable random graph), the countable generic poset, the rationals, the rational Urysohn space, the rational Urysohn sphere, and the random tournament, respectively.  
\end{example}

\subsection{Homomorphism-homogeneous relational structures}
In \cite{CamNes06}, Cameron and Ne\v{s}et\v{r}il introduced several variants of the notion of homogeneity. One of these variations is homomorphism-homogeneity: 
\begin{definition}
    A countable $\uSigma$-structure $\bU$ is called  \emph{homomorphism-homogeneous} if for every $\bA\in\Age(\bU)$, for all embeddings $\iota\colon \bA\injto\bU$, and for all homomorphisms $h\colon \bA\to\bU$  there exists an endomorphism $\hat{h}$ of $\bU$ such that $h=\hat{h}\circ\iota$.  
\end{definition}
\begin{remark}
    This definition of homomorphism homogeneity slightly differs from the original given definition in \cite{CamNes06}. However, the equivalence of our definition to the original one is obvious. 
\end{remark}

The connection between the notions of homogeneity and homomor\-phism-homogeneity was created by Dolinka (cf.\ \cite[Proposition 3.8]{Dol14}):

\begin{definition} Let $\cC$ be a class of $\uSigma$-structures. We say that $\cC$ has the \emph{homo-amalgamation property} (\HAP) if for all $\bA, \bB\in\cC$, $g\colon \bA\injto\bB$, $\bT_1\in\cC$,
  $a\colon \bA\to \bT_1$ there exist $\bT_2\in\cC$, $b\colon \bB\to \bT_2$,
  $h\colon \bT_1\injto\bT_2$ such that the following diagram commutes:
  \[
  \begin{tikzcd}
    \bB \rar[dashed]{b}& \bT_2\\
    \bA\rar{a}\uar[swap,hook]{g} & \bT_1\uar[hook,dashed]{h} 
  \end{tikzcd}
  \]
\end{definition}

\begin{proposition}[{\cite[Proposition 3.8]{Dol14}}]\label{homhap}
    Let $\bU$ be a countable homogeneous structures. Then $\bU$ is homomorphism-homogeneous if and only if its age has the \HAP.
\end{proposition}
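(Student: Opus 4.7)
The plan is to handle the two directions separately. The forward direction is a direct use of homomorphism-homogeneity to realize the amalgam inside $\bU$, while the backward direction requires a one-sided back-and-forth along an enumeration of $\bU$, combining the \HAP with the homogeneity of $\bU$ at every step.

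For the forward direction, assume $\bU$ is homomorphism-homogeneous and fix an \HAP-configuration $g\colon \bA\injto\bB$, $a\colon \bA\to\bT_1$ in $\Age(\bU)$. Since $\bB,\bT_1\in\Age(\bU)$, pick embeddings $\iota\colon \bB\injto\bU$ and $\jmath\colon \bT_1\injto\bU$. Then $\iota\circ g\colon \bA\injto\bU$ is an embedding and $\jmath\circ a\colon \bA\to\bU$ is a homomorphism, so homomorphism-homogeneity provides an endomorphism $e$ of $\bU$ with $e\circ\iota\circ g=\jmath\circ a$. Let $\bT_2$ be the finite substructure of $\bU$ carried by $(e\circ\iota)(\bB)\cup\jmath(\bT_1)$ and set $b:=e\circ\iota\colon \bB\to\bT_2$, $h:=\jmath\colon \bT_1\injto\bT_2$. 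This yields the required amalgam.

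For the backward direction, assume $\Age(\bU)$ has the \HAP and fix $\bA\in\Age(\bU)$, $\iota\colon \bA\injto\bU$, $h\colon \bA\to\bU$. Enumerate $\bU=\{u_0,u_1,\dots\}$ and construct a chain of finite substructures $\bD_0\subseteq \bD_1\subseteq\cdots$ of $\bU$ with $\bigcup_n\bD_n=\bU$, together with homomorphisms $\phi_n\colon \bD_n\to\bU$, such that $\bD_0=\iota(\bA)$, $\phi_0=h\circ\iota^{-1}$, $u_n\in\bD_{n+1}$, and $\phi_{n+1}\restr_{\bD_n}=\phi_n$. At stage $n+1$, let $\bD_n'$ be the substructure of $\bU$ on $\bD_n\cup\{u_n\}$ and apply the \HAP to the data $\bA\rightsquigarrow\bD_n$, $\bB\rightsquigarrow\bD_n'$, $g\rightsquigarrow$ inclusion, $\bT_1\rightsquigarrow\phi_n(\bD_n)$ (as substructure of $\bU$), $a\rightsquigarrow\phi_n$. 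The \HAP yields $\bT_2\in\Age(\bU)$, a homomorphism $b\colon \bD_n'\to\bT_2$, and an embedding $h'\colon \phi_n(\bD_n)\injto\bT_2$ commuting with $\phi_n$. Embed $\bT_2\injto\bU$ by some $\kappa$; now $\kappa\circ h'$ and the inclusion $\phi_n(\bD_n)\injto\bU$ are two embeddings of the same finite substructure of $\bU$ into $\bU$, so homogeneity provides $\alpha\in\Aut(\bU)$ with $\alpha\circ\kappa\circ h'=\operatorname{id}_{\phi_n(\bD_n)}$. Setting $\phi_{n+1}:=\alpha\circ\kappa\circ b$ gives a homomorphism $\bD_n'\to\bU$ extending $\phi_n$. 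The union $\hat h:=\bigcup_n\phi_n$ is then an endomorphism of $\bU$, and $\hat h\circ\iota=h$ by construction.

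The main obstacle is the interface step in the backward direction: the \HAP only produces an abstract amalgam $\bT_2$, whereas in order for the inductive extension $\phi_{n+1}$ to be compatible with $\phi_n$ we must realize $\bT_2$ inside $\bU$ in such a way that its embedded copy of $\phi_n(\bD_n)$ agrees with the already-fixed copy sitting inside $\bU$. Homogeneity of $\bU$ is exactly the ingredient that lets us realign by an ambient automorphism; once this is carried out, the verifications that $\phi_0$ is a well-defined homomorphism, that each $\phi_{n+1}$ genuinely extends $\phi_n$, and that the resulting union is a total endomorphism of $\bU$ are routine.
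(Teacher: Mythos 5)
Your argument is correct: the forward direction correctly realizes the \HAP{} configuration inside $\bU$ via homomorphism-homogeneity, and the backward direction is a sound one-point-at-a-time forth construction in which homogeneity of $\bU$ realigns the abstract amalgam $\bT_2$ with the already-placed copy of $\phi_n(\bD_n)$. The paper itself gives no proof (it quotes the result from Dolinka), and your proof is essentially the standard argument behind that cited proposition, so there is nothing to object to.
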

\begin{example}\label{exHAP}
Given the rather extensive literature on the classification of homomorphism-homogeneous structures, Proposition~\ref{homhap}  is a convenient tool for showing that the age of a given homogeneous structure has the \HAP:
    \begin{itemize}
        \item By \cite[Proposition 2.1]{CamNes06} the Rado graph is homomorphism-ho\-mogeneous. Thus, the class of finite graphs has the \HAP.
        \item  By \cite[Proposition 25]{CamLoc10} and \cite[Theorem 4.5]{Mas07}, the countable generic poset $(\mathbb{P},\le)$ is homomorphism-homogeneous. Thus, the class of finite posets has the \HAP.
        \item By \cite[Proposition 15]{CamLoc10})  the countable generic strict poset $(\mathbb{P},<)$ is homomorphism-homogeneous. Thus, the class of finite strict partial orders has the \HAP.
        \item By \cite[Proposition 25]{CamLoc10} and \cite[Theorem 4.5]{Mas07}, we have that the structure  $(\mathbb{Q},\le)$ is homomorphism-homogeneous. Thus, the class of finite linear orders has the \HAP.
        \item By \cite[Proposition 15]{CamLoc10} the structure  $(\mathbb{Q},<)$ is homomorphism-homogeneous. Thus, the class of finite strict linear orders has the \HAP.        
    \end{itemize}
    For another group of ages we observe the \HAP in a more direct way:
    \begin{itemize}
    \item It was shown in \cite[Lemma 3.5]{Dol14} that the class of finite metric spaces with rational distances has the \HAP. 
    \item The same construction as in \cite[Lemma 3.5]{Dol14} shows that the class of finite metric spaces with rational distances $\le 1$ has the \HAP.
    \item Every homomorphism between tournaments is an embedding. Thus, since the class of finite tournaments has the \AP, it follows that it also has the \HAP.
    \end{itemize}

On the other hand, a number of prominent countable homogeneous structures   fails to be homomorphism-homogeneous, and thus, their ages do not have the \HAP. This list includes the Henson graphs (cf.~\cite{Hen71}) and the Henson digraphs (cf.~\cite{Hen72}). 
\end{example}

\section{Universal homogeneous endomorphisms}
\begin{definition}
    Let $\bU,\bA$ be relational structures of the same type,  let $u$ be an endomorphism of $\bU$, and let $h\colon \bA\to\bU$ be a homomorphism. If there exists an embedding $\iota\colon \bA\injto\bU$ such that $h=u\circ\iota$, then we say that \emph{$h$  factors through $u$ by $\iota$}. 
\end{definition}

\begin{definition}
    Let $\bU$ be a countable relational structure. An endomorphism $u$ of $\bU$ is called \emph{universal} if for every $\bA\in\overline{\Age(\bU)}$ we have that every  homomorphism $h\colon \bA\to\bU$ factors through $u$ by some embeddings $\iota:\bA\injto\bU$.
\end{definition}

\begin{definition}
    Let $\bU$ be a countable relational structure. An endomorphism $u$ of $\bU$ is called \emph{homogeneous} if for every $\bA\in\Age(\bU)$, for every homomorphism $h\colon \bA\to\bU$, and for all factorization $h=u\circ\iota_1=u\circ\iota_2$ by embeddings $\iota_1,\iota_2\colon \bA\injto\bU$, there exits an automorphism $f$ of $\bU$, such that $f\circ\iota_1=\iota_2$, and such that $u\circ f=u$. 
\end{definition}

\begin{remark}
    Universal homogeneous endomorphisms were introduced in \cite{PecPec13a}, where they were mainly used for the characterization of retracts of  homogeneous structures. 
\end{remark}

\begin{definition} Let $\cC$ be a class of $\uSigma$-structures. We say that $\cC$ has the 
  \emph{amalgamated extension property} (\AEP) if for all $\bA,\bB_i, \bT\in\cC$, $f_i\colon \bA\injto\bB_i$,
  $h_i\colon \bB_i\to \bT$ (where $i\in\{1,2\}$), with $h_1\circ f_1= h_2\circ f_2$, there exist $\bC\in\cC$, $g_i\colon \bB_i\injto\bC$
  (where $i\in\{1,2\}$), $\bT'\in\cC$, $h\colon \bC \to \bT'$,
  $k\colon \bT\injto\bT'$ such that the following diagram commutes:
  \[
  \begin{tikzcd}
    ~     & & &\bT'\\
     &  & \bT\urar[hook,dashed][swap]{k}\\
    \bB_1 \arrow[bend left]{urr}{h_1} \rar[hook,dashed]{g_1}
    & \bC\arrow[bend left,dashed]{uurr}[near end]{h}\\
    \bA
    \uar[hook]{f_1}\rar[hook]{f_2} & \bB_2. \arrow[bend
    right]{uur}[swap]{h_2} \uar[dashed,swap,hook]{g_2}
  \end{tikzcd}
  \]
\end{definition}

The following is a complete characterization of all countable homogeneous structures that have a universal homogeneous endomorphism:
\begin{proposition}[{\cite[Proposition 4.7]{PecPec13a}}]\label{univhomendo}
    Let $\bU$ be a countably infinite homogeneous structure. Then $\bU$ has a universal homogeneous endomorphism if and only if $\Age(\bU)$ has the \AEP and the \HAP.
\end{proposition}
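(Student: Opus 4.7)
The plan is to prove the two directions separately, using standard \Fraisse{}-style techniques on the category of "partial endomorphisms" into $\bU$.

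For the forward direction, I would assume $u:\bU\to\bU$ is universal and homogeneous and derive \HAP and \AEP by diagram chases. For \HAP, given $g:\bA\injto\bB$ in $\Age(\bU)$ and $a:\bA\to\bT_1$, I would embed $\bT_1\injto\bU$ by some $e_1$, factor $e_1\circ a:\bA\to\bU$ through $u$ via universality to obtain an embedding $\iota_{\bA}:\bA\injto\bU$ with $u\circ\iota_{\bA}=e_1\circ a$, and extend $\iota_{\bA}$ along $g$ to $\iota_{\bB}:\bB\injto\bU$ using homogeneity of the \Fraisse{} limit $\bU$; any finite $\bT_2\subseteq\bU$ containing $e_1(\bT_1)\cup u(\iota_{\bB}(\bB))$, together with $b:=u\circ\iota_{\bB}$ and $h:=e_1$, witnesses \HAP. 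For \AEP, I would embed $\bT\injto\bU$ via some $e$, lift each $h_i:\bB_i\to\bT$ to an embedding $\iota_i:\bB_i\injto\bU$ via universality, and use $h_1\circ f_1=h_2\circ f_2$ to get $u\circ\iota_1\circ f_1=u\circ\iota_2\circ f_2$. Homogeneity of $u$ then supplies $\alpha\in\Aut(\bU)$ with $u\circ\alpha=u$ and $\alpha\circ\iota_1\circ f_1=\iota_2\circ f_2$; after replacing $\iota_1$ by $\alpha\circ\iota_1$, the copies $\iota_1(\bB_1)$ and $\iota_2(\bB_2)$ agree on $\bA$, and finite substructures $\bC,\bT'\subseteq\bU$ covering $\iota_1(\bB_1)\cup\iota_2(\bB_2)$ and $e(\bT)\cup u(\bC)$ respectively close the square.

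For the backward direction, I would build $u$ as the union of an ascending chain $u_0\subseteq u_1\subseteq\cdots$ of finite partial homomorphisms of $\bU$ into itself, interleaving three countable families of tasks. Task (T1) (domain extension): to force $a_n\in\operatorname{dom}(u)$, apply \HAP with $\bA:=\operatorname{dom}(u_n)$, $\bB:=\operatorname{dom}(u_n)\cup\{a_n\}$ viewed inside $\bU$, $\bT_1:=\operatorname{ran}(u_n)$, $a:=u_n$, and re-embed the resulting $\bT_2$ into $\bU$ by homogeneity of $\bU$ so that the output $b:\bB\to\bT_2$ becomes a legitimate extension of $u_n$ inside $\bU$. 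Task (T2) (factorization): given a pair $(\bA,h:\bA\to\bU)$ with $\bA\in\Age(\bU)$, apply \AEP to amalgamate $u_n:\operatorname{dom}(u_n)\to\operatorname{ran}(u_n)$ with $h:\bA\to h(\bA)$ above the common target $\operatorname{ran}(u_n)\cup h(\bA)\subseteq\bU$; homogeneity of $\bU$ then realizes the resulting amalgam $(\bC,\bT',h':\bC\to\bT')$ inside $\bU$, extending the existing embeddings, and produces the desired $\iota:\bA\injto\bU$ with $u\circ\iota=h$. Task (T3) (homogeneity of $u$): for every pair of finite embeddings $\iota_1,\iota_2:\bA\injto\bU$ with $u\circ\iota_1=u\circ\iota_2$, a back-and-forth construction is used to build an automorphism $f\in\Aut(\bU)$ with $u\circ f=u$ and $f\circ\iota_1=\iota_2$; at each step, \AEP is precisely the amalgamation square that permits $f$ to be extended by one more element of $\bU$ while maintaining $u\circ f=u$. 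Finally, finite universality is upgraded to countable universality in $\overline{\Age(\bU)}$ by a standard one-point back-and-forth that uses the now-established homogeneity of $u$ to reconcile successive finite lifts.

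The hardest step will be the coordination of the three task families. Because $u$ lives on the fixed ambient structure $\bU$ rather than on a freely constructed one, every factorization task introduces unanticipated relations -- supplied by $\bU$ -- between the existing domain of $u_n$ and any newly embedded copy of $\bA$; these relations cannot be handled by a naive disjoint amalgamation, but must be absorbed by \AEP applied with the target substructure lying inside $\bU$, followed by a re-embedding of the output amalgam into $\bU$ via the homogeneity of the \Fraisse{} limit. Once this interplay between $u$ and the ambient $\bU$-structure is correctly set up, the verification of (T3) follows the standard \Fraisse{} pattern, with \AEP playing the role of amalgamation in the category whose objects are pairs $(\bB,h:\bB\to\bT)$ with $\bB,\bT\in\Age(\bU)$ and $h$ a homomorphism, and whose morphisms are pairs of embeddings making the obvious square commute.
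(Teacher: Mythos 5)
Note first that the paper does not prove this proposition; it is quoted from \cite{PecPec13a}, where the proof runs through a general existence theorem for universal homogeneous objects in comma categories over $\bU$, with \HAP and \AEP supplying the amalgamation-type hypotheses of that theorem. Your forward direction is fine and essentially the expected diagram chase: factor $e_1\circ a$ (resp.\ $e\circ h_i$) through $u$, use homogeneity of $u$ to align the two lifts over $\bA$, and read off the witnessing finite substructures of $\bU$; no objection there. Your backward direction, a hands-on construction of $u$ as a union of finite partial homomorphisms, is a genuinely different and in principle viable route, but as written it has two real gaps.

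First, task (T2) applies \AEP to a configuration to which it does not apply. The \AEP of this paper amalgamates two homomorphisms $h_1\colon\bB_1\to\bT$, $h_2\colon\bB_2\to\bT$ \emph{over a common source} $\bA$ with embeddings $f_i\colon\bA\injto\bB_i$ satisfying $h_1\circ f_1=h_2\circ f_2$. In your factorization step the two sides are $u_n\colon\operatorname{dom}(u_n)\to\bU$ and the new $h\colon\bA\to\bU$, which share no common substructure on which the values are already aligned, and you never say what the base of the amalgamation is. What you actually need is the joint (``empty-base'') version of \AEP, and this does not follow formally from the instances with nonempty base, nor can \HAP supply it: \HAP extends a homomorphism along an embedding of its domain but gives no control over the values on the new points, while \AEP over a nonempty base presupposes exactly the alignment of values you are trying to create. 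So either you must work with a convention in which the empty structure belongs to $\Age(\bU)$ and \AEP is assumed for it, or you must prove the joint statement separately; as the necessity direction shows it does hold whenever a universal endomorphism exists, this is not a cosmetic point but a missing lemma at the heart of universality.

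Second, the homogeneity tasks (T3) are mistimed. As described, the task family is indexed by pairs $\iota_1,\iota_2$ with $u\circ\iota_1=u\circ\iota_2$, a condition on the \emph{completed} $u$, so it cannot be enumerated while $u$ is being built; and the back-and-forth you propose to run afterwards cannot use \AEP ``at each step'', because once $u$ is total and fixed, \AEP produces an abstract amalgam but not an element $y\in\bU$ with the prescribed value $u(y)=u(x)$ and the prescribed quantifier-free type over the range of the current partial isomorphism --- such witnesses must already exist in $\bU$, i.e.\ they must have been planted during the construction. The repair is standard but has to be said: interleave one-point extension tasks indexed by finite-stage data (a stage $m$, a finite partial isomorphism $p$ of $\bU$ with $u_m\circ p=u_m\restr_{\operatorname{dom}(p)}$, and a point $x\in\operatorname{dom}(u_m)$), each solvable by one application of \AEP with base the substructure on $\operatorname{ran}(p)$ followed by two uses of the extension property of $\bU$; these conditions persist under extending $u$, the final $u$ then has the resulting one-point extension property, and only then does the back-and-forth produce the automorphism $f$ with $u\circ f=u$. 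Your final upgrade from finite to countable universality via homogeneity of $u$ is correct (it is the same device as Lemma~\ref{closeiota}), but it, too, depends on both of the above being fixed first.
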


\begin{definition}
    For a class $\cC$ of $\uSigma$-structures, by $(\cC,\to)$ we will denote the category that has the elements of $\cC$ as objects and all homomorphisms between the elements of $\cC$ as morphisms. Analogously, by $(\cC,\injto)$ we will denote the subcategory of $(\cC,\to)$ whose morphisms are all embeddings between structures of $\cC$.   
\end{definition}
The following is going to be useful in order to identify relational structures whose age has the \AEP:
\begin{definition}[{\cite[Section 1.1]{Dol12}}]
	Let $\bU$ be a countably infinite $\uSigma$-structure. Then we say that $\Age(\bU)$ has the \emph{strict amalgamation property} (strict \AP) if for all $\bA,\bB_1,\bB_2\in\Age(\bU)$, and for all embeddings $f_1\colon \bA\injto\bB_1$, $f_2\colon \bA\injto\bB_2$ there exists some $\bC\in\Age(\bU)$ and embeddings $g_1\colon \bB_1\injto\bC$, $g_2\colon \bB_2\injto\bC$ such that the following is a \emph{pushout-square} in   the category $(\overline{\Age(\bU)},\to)$: 
	\[
	\begin{tikzcd}
		\bB_1 \rar[hook,dashed]{g_1}& \bC\\
		\bA  
		%\arrow[ur, phantom, "\urcorner", very near start]
		\uar[hook]{f_1}\rar[hook,swap]{f_2}&  \bB_2.\uar[hook,dashed]{g_2}
	\end{tikzcd}
	\] 
	That is, if $\bT\in\overline{\Age(\bU)}$, and if $h_1\colon \bB_1\to\bT$, $h_2\colon \bB_2\to\bT$ are homomorphism such that $h_1\circ f_1=h_2\circ f_2$, then there exists a unique homomorphism $h\colon \bC\to\bT$, such that the following diagram commutes:
  \[
  \begin{tikzcd}
  ~   & & \bT\\
    \bB_1 \arrow[bend left]{urr}{h_1} \rar[hook]{g_1}
    & \bC\urar[dashed]{h}\\
    \bA
    \uar[hook]{f_1}\rar[hook]{f_2} & \bB_2. \arrow[bend
    right]{uur}[swap]{h_2} \uar[swap,hook]{g_2}
  \end{tikzcd}
  \]
\end{definition}

\begin{lemma}
    Let $\bU$ be a countable $\uSigma$-structure whose age has the strict \AP. Then $\Age(\bU)$ has the \AEP, too.
\end{lemma}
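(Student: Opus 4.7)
The plan is to observe that the \AEP is essentially what the strict \AP gives us, plus the trivial choice $\bT' := \bT$, $k := \operatorname{id}_\bT$. The only thing to check is that the pushout property, which is formulated over the possibly larger category $(\overline{\Age(\bU)},\to)$, still applies when the test object $\bT$ happens to be a finite member of $\Age(\bU)$. But $\Age(\bU)\subseteq \overline{\Age(\bU)}$, so this is automatic.

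Concretely, I would start with the \AEP data: $\bA,\bB_1,\bB_2,\bT\in\Age(\bU)$, embeddings $f_i\colon \bA\injto\bB_i$, and homomorphisms $h_i\colon \bB_i\to\bT$ with $h_1\circ f_1=h_2\circ f_2$. Apply the strict \AP to $f_1,f_2$ to obtain $\bC\in\Age(\bU)$ and embeddings $g_i\colon \bB_i\injto\bC$ making
\[
\begin{tikzcd}
\bB_1 \rar[hook]{g_1}& \bC\\
\bA\uar[hook]{f_1}\rar[hook,swap]{f_2}& \bB_2\uar[hook,swap]{g_2}
\end{tikzcd}
\]
a pushout in $(\overline{\Age(\bU)},\to)$. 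Since $\bT\in\Age(\bU)\subseteq\overline{\Age(\bU)}$ and $h_1\circ f_1=h_2\circ f_2$, the universal property of the pushout yields a (unique) homomorphism $h\colon \bC\to\bT$ satisfying $h\circ g_1=h_1$ and $h\circ g_2=h_2$.

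It now suffices to take $\bT':=\bT$ and $k:=\operatorname{id}_\bT$. Then $\bT'\in\Age(\bU)$, $k$ is an embedding, and the relations $k\circ h_i=h_i=h\circ g_i$ together with $g_1\circ f_1=g_2\circ f_2$ show that the entire \AEP-diagram commutes. Hence $\Age(\bU)$ has the \AEP.

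Essentially, there is no real obstacle here; the statement unwinds almost tautologically once one notices that the pushout in $(\overline{\Age(\bU)},\to)$ is tested against all younger structures, in particular against finite ones. The only conceptual point worth flagging is that the strict \AP is strictly stronger than what \AEP requires, because \AEP permits enlarging the target $\bT$ to some $\bT'$, whereas the pushout construction lets us keep $\bT'=\bT$.
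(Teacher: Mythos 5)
Your proposal is correct and follows essentially the same route as the paper's proof: apply the strict \AP to obtain a pushout square in $(\overline{\Age(\bU)},\to)$, use its universal property (noting $\bT\in\Age(\bU)\subseteq\overline{\Age(\bU)}$) to get the mediating homomorphism $h\colon\bC\to\bT$, and then take $\bT':=\bT$ with $k$ the identical embedding.
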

\begin{proof}
    Let $\bA, \bB_1, \bB_2, \bT\in\Age(\bU)$, let $f_1\colon \bA\injto\bB_1$, $f_2\colon \bA\injto\bB_2$ be embeddings and let $h_1\colon \bB_1\to\bT$, $h_2\colon \bB_2\to\bT$, such that $h_1\circ f_1=h_2\circ f_2$. Since $\Age(\bU)$ has the strict amalgamation property, there exists a $\bC\in\Age(\bU)$,  and embeddings $g_1\colon \bB_1\injto\bC$, $g_2\colon \bB_2\injto\bC$, such that the following diagram is a pushout-square in $(\overline{\Age(\bU)},\to)$:
	\begin{equation}\label{posquare}
	\begin{tikzcd}
		\bB_1 \rar[hook,dashed]{g_1}& \bC\\
		\bA  
		%\arrow[ur, phantom, "\urcorner", very near start]
		\uar[hook]{f_1}\rar[hook,swap]{f_2}&  \bB_2.\uar[hook,dashed]{g_2}
	\end{tikzcd}
	\end{equation}
      Since $h_1\circ f_1=h_2\circ f_2$, and since \eqref{posquare} is a pushout-square, there exists a unique homomorphism $h\colon \bC\to\bT$ that makes the following diagram commutative:
        \[
  \begin{tikzcd}
  ~   & & \bT\\
    \bB_1 \arrow[bend left]{urr}{h_1} \rar[hook]{g_1}
    & \bC\urar[dashed]{h}\\
    \bA
    \uar[hook]{f_1}\rar[hook]{f_2} & \bB_2. \arrow[bend
    right]{uur}[swap]{h_2} \uar[swap,hook]{g_2}
  \end{tikzcd}%\raisebox{-11ex}.
  \]
    Now, we can put $\bT':=\bT$, and we can define $k\colon \bT\injto\bT'$ to be the identical embedding, and we obtain that the following diagram commutes, too:
      \[
  \begin{tikzcd}
    ~     & & &\bT'\\
     &  & \bT\urar[hook][swap]{k}\\
    \bB_1 \arrow[bend left]{urr}{h_1} \rar[hook]{g_1}
    & \bC\arrow[bend left]{uurr}[near end]{h}\\
    \bA
    \uar[hook]{f_1}\rar[hook]{f_2} & \bB_2. \arrow[bend
    right]{uur}[swap]{h_2} \uar[swap,hook]{g_2}
  \end{tikzcd}%\raisebox{-11ex}.
  \]
  This shows, that $\Age(\bU)$ has the \AEP.  
\end{proof}

A special case of the strict amalgamation property is the free amalgamation property:
\begin{definition}
    Let $\bA$, $\bB_1$, $\bB_2$ be $\uSigma$-structures, such that $\bA\le\bB_1$, $\bA\le\bB_2$, and such that $B_1\cap B_2=A$. Then the \emph{amalgamated free sum}  of $\bB_1$ and $\bB_2$ with respect to $\bA$ is the $\uSigma$-structure $\bB_1\oplus_\bA \bB_2$ with carrier $B_1\cup B_2$, such that for each $\varrho\in\Sigma$ we have 
    \[
        \varrho^{\bB_1\oplus_\bA\bB_2} = \varrho^{\bB_1}\cup\varrho^{\bB_2}
    \]    
\end{definition}
\begin{definition}[{cf.\ \cite[Page 1602]{Mac11}}]
    An age $\cC$ of $\uSigma$-structures is sayed to have the \emph{free amalgamation property} (free \AP) if $\cC$ is closed with respect to amalgamated free sums. A homogeneous structures whose age has the free amalgamation property is called \emph{free homogeneous}. 
\end{definition}
\begin{lemma}
    Let $\bU$ be a countably infinite $\uSigma$-structure, such that $\Age(\bU)$ has the free amalgamation property. Then $\Age(\bU)$ has the strict amalgamation property, too.
\end{lemma}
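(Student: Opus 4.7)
My plan is to show that the free amalgam itself serves as the pushout square witnessing the strict amalgamation property.

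\medskip

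Let $\bA, \bB_1, \bB_2 \in \Age(\bU)$ and let $f_1\colon \bA\injto\bB_1$, $f_2\colon \bA\injto\bB_2$ be embeddings. By passing to isomorphic copies I would first assume, without loss of generality, that $f_1$ and $f_2$ are identical embeddings and that $B_1\cap B_2 = A$; then $g_i\colon \bB_i\injto \bC:=\bB_1\oplus_\bA\bB_2$ can be taken to be the identical embeddings as well. By the free amalgamation hypothesis we have $\bC\in\Age(\bU)$, so the square
\[
\begin{tikzcd}
\bB_1 \rar[hook]{g_1}& \bC\\
\bA \uar[hook]{f_1}\rar[hook,swap]{f_2}&  \bB_2\uar[hook,swap]{g_2}
\end{tikzcd}
\]
lives in $\Age(\bU)$ and is commutative.

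\medskip

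The main task is to verify the universal property. Given $\bT\in\overline{\Age(\bU)}$ and homomorphisms $h_i\colon \bB_i\to\bT$ with $h_1\circ f_1=h_2\circ f_2$, I define $h\colon C\to T$ by $h(x):=h_1(x)$ when $x\in B_1$ and $h(x):=h_2(x)$ when $x\in B_2$. This is well-defined on the overlap $B_1\cap B_2=A$ precisely because of the compatibility condition $h_1\restr_A = h_1\circ f_1 = h_2\circ f_2 = h_2\restr_A$. To see that $h$ is a homomorphism, take $\varrho\in\Sigma$ and $\ta\in\varrho^\bC$. By the very definition of the amalgamated free sum, $\varrho^\bC=\varrho^{\bB_1}\cup\varrho^{\bB_2}$, so either $\ta\in\varrho^{\bB_1}$ and $h(\ta)=h_1(\ta)\in\varrho^\bT$, or $\ta\in\varrho^{\bB_2}$ and $h(\ta)=h_2(\ta)\in\varrho^\bT$.

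\medskip

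Uniqueness of $h$ is immediate: any $h'\colon \bC\to\bT$ with $h'\circ g_i = h_i$ must coincide with $h_i$ on $B_i$, and since $C=B_1\cup B_2$, this pins $h'$ down completely. Thus the displayed square is a pushout in $(\overline{\Age(\bU)},\to)$, which is exactly the strict \AP. The only subtlety I anticipate is the bookkeeping around the identification $f_i = \mathrm{id}$; once this reduction is in place, the free amalgam's defining clause $\varrho^\bC=\varrho^{\bB_1}\cup\varrho^{\bB_2}$ does all the real work, because it says precisely that no ``new'' relational tuples are introduced beyond those already present in the factors, and hence nothing obstructs the factorisation of compatible homomorphism pairs.
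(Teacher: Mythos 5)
Your proof is correct and follows essentially the same route as the paper: the free amalgam $\bB_1\oplus_\bA\bB_2$ is exhibited as the pushout, with the mediating homomorphism defined piecewise on $B_1\cup B_2$. The only difference is cosmetic --- the paper declares the pushout verification ``easy to see'' in the category of all $\uSigma$-structures and then invokes fullness of $(\overline{\Age(\bU)},\to)$, whereas you verify the universal property directly against cocones in $\overline{\Age(\bU)}$, which amounts to the same computation.
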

\begin{proof}
    It is easy to see that the if $\bA$, $\bB_1$, and $\bB_2$ are $\uSigma$-structures with $\bA\le\bB_1$, $\bA\le\bB_2$, and $B_1\cap B_2=A$, then the following is a pushout-square in the category of all $\uSigma$-structures:
	\[
	\begin{tikzcd}
		\bB_1 \rar[hook]{=}& \bB_1\oplus_\bA\bB_2\\
		\bA  
		%\arrow[ur, phantom, "\urcorner", very near start]
		\uar[hook]{=}\rar[hook,swap]{=}&  \bB_2.\uar[hook]{=}
	\end{tikzcd}
	\]
	Since $(\overline{\Age(\bU)},\to)$ is a full subcategory of the category of all $\uSigma$-struc\-tures, it follows that amalgamated free sums in $\Age(\bU)$ are pushouts in $(\overline{\Age(\bU)},\to)$, too. Consequently, $\Age(\bU)$ has the strict amalgmaation property.  
\end{proof}

\begin{example}\label{exAEP}
Often it is easier to observe the strict \AP rather than the \AEP. In particular, the ages of the following relational structures  have the strict \AP, and have therefore also the \AEP:
\begin{itemize}
\item the Rado graph (because it is a free homogeneous structure),
\item the countable generic poset $(\mathbb{P},\le)$ (cf.\ \cite[Pages 7,8]{Dol14}),
\item the countable generic strict poset $(\mathbb{P},<)$ (by the same argument as for $(\mathbb{P},\le)$),
\item the Henson-graphs (because they  are free homogeneous structures, cf.\ \cite[Example 2.2.2]{Mac11}),
\item the Henson-digraphs (because they  are free homogeneous structures, cf.\ \cite[Page 1604]{Mac11}).
\end{itemize}
There is also a number of ages with the \AEP but without the strict \AP:
\begin{itemize}
    \item The class of finite tournaments has the \AP. Since every homomorphism between tournaments is an embedding, it follows that the class of all finite tournaments trivially fulfills the \AEP.
    \item For the same reason as above, the class of finite strict linear orders satisfies the \AEP.
    \item The class of finite (non-strict) linear orders satisfies the \AEP (cf.\ \cite[Proposition 3.23]{Kub13}).
    \item The class of finite metric spaces with rational distances has the \AEP (implicit in \cite{Kub13}).
    \item The class of finite metric spaces with rational distances $\le 1$ has the \AEP (implicit in \cite{Kub13}).
\end{itemize}
Using Example~\ref{exHAP} together with Proposition~\ref{univhomendo}, we obtain that the following structures have universal homogeneous endomorphisms:
\begin{itemize}
    \item the Rado graph,
    \item the countable generic tournament,
    \item the countable generic strict poset $(\mathbb{P},<)$,
    \item the countable generic poset $(\mathbb{P},\le)$,
    \item the rationals with strict order $(\mathbb{Q},<)$,
    \item the rationals with the non-strict order $(\mathbb{Q},\le)$,
    \item the rational Urysohn-space,
    \item the rational Urysohn-sphere.
\end{itemize}
\end{example}
\begin{remark}
    For some structures we can give an explicit description of a universal
homogeneous endomorphism. For the countable generic tournament and for
$(\mathbb{Q},<)$ the identical automorphism is a universal homogeneous
endomorphism. For $(\mathbb{Q},\le)$ a universal homogeneous endomorphism was
described in \cite[Remark on page 32]{PecPec13a}. In \cite{LocTru12}, a generic
endomorphism of $(\mathbb{Q},\le)$ was described. This endomorphism turns out
to be universal homogeneous in our sense. It would be interesting to examine
the relations between generic endomorphisms and universal homogeneous endomorphism. 
\end{remark}

\section{Strong gate coverings}
\begin{definition}\label{MonoidStrongGateCovering}
    Let $A$ be a countably infinite  set, let $\monoid{M}\le \monoid{T}_A$ be a
transformation monoid, let $\monoid{G}$ be the group of units in $\monoid{M}$,
and let $\overline{\monoid{G}}$ be the closure of $\monoid{G}$ in $\monoid{M}$.
Then we say that $\monoid{M}$  has a \emph{strong gate covering} if there
exists an open covering $\cU$ of $\monoid{M}$ and elements $f_U\in U$, for
every $U\in\cU$, such that for all $U\in\cU$ and for all Cauchy-sequences
$(g_n)_{n\in\bN}$ of elements from $U$  there exist Cauchy-sequences
$(\kappa_n)_{n\in\bN}$ and $(\iota_n)_{n\in\bN}$ of elements from
$\overline{\monoid{G}}$ such that for all $n\in\bN$ we have
    \[
        g_n = \kappa_n\circ f_U\circ\iota_n.
    \]
\end{definition}
\begin{remark}
    Strong gate coverings appear implicitly for the first time in
\cite{BodPinPon13}. In particular, it is shown there that the endomorphism
monoid of the Rado graph has a strong gate covering.  
\end{remark}

\begin{lemma}\label{closeiota}
  Let $\bU$ be a relational structure that has a universal homogeneous
endomorphism $u$. Let $\bA$ be a finite substructure of $\bU$. Let $f,g$ be 
   endomorphisms of $\bU$ that agree on $A$. Then
  there exist selfembeddings $\iota_1,\iota_2$, such that
  \begin{enumerate}
  \item $f=u\circ\iota_1$,
  \item $g=u\circ\iota_2$,
  \item $\iota_1\restr_A=\iota_2\restr_A$.
  \end{enumerate}
\end{lemma}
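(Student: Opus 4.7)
The plan is to first unfold the factorizations guaranteed by universality of $u$, and then use homogeneity of $u$ to align the two factorizations on $A$ by twisting one of them with an appropriate automorphism of $\bU$.

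First I would observe that, because $\bU \in \overline{\Age(\bU)}$ and $u$ is a universal endomorphism of $\bU$, both $f$ and $g$ (viewed as homomorphisms $\bU \to \bU$) factor through $u$: there exist selfembeddings $\iota_1', \iota_2'\colon \bU \injto \bU$ such that
\[
  f = u \circ \iota_1' \quad\text{and}\quad g = u \circ \iota_2'.
\]
At this point, conditions (1) and (2) already hold (with $\iota_1'$, $\iota_2'$ in place of $\iota_1$, $\iota_2$), but there is no reason for $\iota_1'$ and $\iota_2'$ to agree on $A$; only the compositions $u \circ \iota_1'$ and $u \circ \iota_2'$ agree on $A$, since they equal $f$ and $g$ respectively.

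Next I would restrict to $\bA$. Since $\iota_1'$ and $\iota_2'$ are selfembeddings of $\bU$, their restrictions $\iota_1'\restr_A$ and $\iota_2'\restr_A$ are embeddings of $\bA$ into $\bU$; and the hypothesis $f\restr_A = g\restr_A$ gives
\[
   u \circ (\iota_1'\restr_A) = f\restr_A = g\restr_A = u \circ (\iota_2'\restr_A).
\]
This is exactly a pair of factorizations through $u$ of the single homomorphism $h := f\restr_A\colon \bA \to \bU$ by two embeddings of $\bA$ into $\bU$. Because $\bA \in \Age(\bU)$ and $u$ is homogeneous, there exists an automorphism $\varphi$ of $\bU$ such that
\[
   \varphi \circ (\iota_1'\restr_A) = \iota_2'\restr_A \quad\text{and}\quad u \circ \varphi = u.
\]

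Finally I would set $\iota_1 := \varphi \circ \iota_1'$ and $\iota_2 := \iota_2'$. These are selfembeddings of $\bU$ (composition of a selfembedding with an automorphism). Condition~(1) follows from $u \circ \iota_1 = (u \circ \varphi) \circ \iota_1' = u \circ \iota_1' = f$; condition~(2) is immediate; and condition~(3) holds because $\iota_1\restr_A = \varphi \circ (\iota_1'\restr_A) = \iota_2'\restr_A = \iota_2\restr_A$. The only subtle step is the appeal to homogeneity of $u$, where one must check that restrictions of selfembeddings of $\bU$ to a finite substructure are themselves embeddings of that substructure into $\bU$; this is immediate from the definition of embeddings.
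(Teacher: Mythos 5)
Your proof is correct and follows essentially the same route as the paper: factor $f$ and $g$ through $u$ by universality, restrict to $\bA$ to obtain two factorizations of $f\restr_A$ through $u$, and then use homogeneity of $u$ to produce an automorphism $\varphi$ with $u\circ\varphi=u$ that lets you replace $\iota_1'$ by $\varphi\circ\iota_1'$. No gaps; the only point worth checking, that restrictions of selfembeddings to $\bA$ are embeddings of $\bA$ into $\bU$, is handled exactly as you say.
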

\begin{proof}
  Since $u$ is universal, there exist selfembeddings $\iota_1$ and $\iota_2$ of
$\bU$, such that 
  \begin{align*}
    f&=u\circ \iota_1, \\
    g&=u\circ \iota_2.
  \end{align*}
  Let $\hat\iota_i:=\iota_i\restr_A$, for $i\in\{1,2\}$, and let $\hat{f}:=
  f\restr_A$. Let $a\in A$. Then we compute
  \[\hat{f}(a) = f(a)=u(\iota_1(a))=u(\hat\iota_1(a)).\]
  Moreover,
  \[\hat{f}(a) = f(a)=g(a) =u(\iota_2(a))=u(\hat\iota_2(a)).\]
  Since $u$ is homogeneous, there exists an automorphism $h$ of
  $\bU$, such that $h\circ\hat\iota_1=\hat\iota_2$, and such that $u\circ h=u$. Let
  $\tilde\iota_1:=h\circ\iota_1$. Then
  $\tilde\iota_1\restr_A=h\circ\hat\iota_1=\hat\iota_2=\iota_2\restr_A$. Moreover,    we have
  \begin{align*}
    u\circ\tilde\iota_1&=u\circ h\circ\iota_1=  u\circ \iota_1= f.
  \end{align*}
\end{proof}

\begin{proposition}\label{contiota}
  Let $\bU$ be a countably infinite  relational structure that has a universal
homogeneous endomorphism $u$. Let $(f_j)_{j<\omega}$ be a sequence of 
  endomorphisms of $\bU$ that converge to an endomorphism $f$ of $\bU$. Then there is a sequence  
  $(\iota_{j})_{j<\omega}$ of homomorphic selfembeddings of $\bU$, such that
  \begin{enumerate}
  \item for every $j<\omega$ we
    have $f_j = u\circ\iota_j$,
  \item $(\iota_{j})_{j<\omega}$ converges to $\iota\in\Emb(\bU)$,
  \item $f = u\circ\iota$.
  \end{enumerate}
\end{proposition}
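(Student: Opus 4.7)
The statement is a continuity-type result: we already know from universality that each $f_j$ admits \emph{some} factorisation $f_j = u\circ\iota_j'$ through an embedding, and we want to upgrade this to a coherent choice of $\iota_j$ converging to an embedding factorisation of $f$. The tool that makes this possible is exactly Lemma~\ref{closeiota}, which lets us modify any two factorisations so that their embedding parts agree on any prescribed finite substructure on which the two endomorphisms already agree.

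\textbf{Key steps.} First, by universality fix some $\iota\in\Emb(\bU)$ with $f = u\circ\iota$; this will be the limit. Choose an enumeration $\ta = (a_n)_{n<\omega}$ of $U$, write $\bA_n$ for the finite substructure of $\bU$ induced on $\{a_0,\dots,a_{n-1}\}$, and use the ultrametric $d_\ta$ on $\End(\bU)$, under which convergence is precisely agreement on longer and longer initial segments. Since $f_j\to f$ pointwise, choose a strictly increasing sequence $N(0)<N(1)<\cdots$ with $f_j\restr_{\bA_n} = f\restr_{\bA_n}$ for every $j\ge N(n)$, and set $n(j):=\max\{n : N(n)\le j\}$ (with $n(j):=0$ if no such $n$ exists), so that $n(j)\to\infty$. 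For each $j$, start with any self-embedding $\iota_j'$ satisfying $f_j = u\circ\iota_j'$ (from universality), and apply Lemma~\ref{closeiota} to the pair $(f_j, f)$ on the finite substructure $\bA_{n(j)}$: the proof of that lemma produces an automorphism $h_j$ of $\bU$ with $u\circ h_j = u$ and $h_j\circ\iota_j'\restr_{\bA_{n(j)}} = \iota\restr_{\bA_{n(j)}}$. Define $\iota_j := h_j\circ\iota_j'$. Then $\iota_j\in\Emb(\bU)$, $u\circ\iota_j = u\circ h_j\circ\iota_j' = u\circ\iota_j' = f_j$, and $\iota_j\restr_{\bA_{n(j)}} = \iota\restr_{\bA_{n(j)}}$.

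\textbf{Conclusion and obstacle.} For any fixed $n$ and all sufficiently large $j$ we have $n(j)\ge n$, hence $\iota_j\restr_{\bA_n} = \iota\restr_{\bA_n}$, i.e.\ $d_\ta(\iota_j,\iota)\le 2^{-n}$; thus $\iota_j\to\iota$ in $\Emb(\bU)$, and $\iota$ is an embedding by construction, giving all three conclusions. The substantive content has been absorbed into Lemma~\ref{closeiota} (which in turn relies on homogeneity of $u$); the only remaining work is the routine diagonal coordination of the enumeration $\ta$ with the pointwise convergence of $(f_j)$, so no genuine obstacle is expected.
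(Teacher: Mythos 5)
Your proof is correct and takes essentially the same route as the paper: fix one factorisation $f=u\circ\iota$ by universality, then use Lemma~\ref{closeiota} (more precisely, the automorphism $h$ constructed in its proof, which allows the factorisation of $f$ to stay fixed while only the factorisation of $f_j$ is adjusted — exactly as the paper also implicitly does) to make $\iota_j$ agree with $\iota$ on finite sets that grow with the rate of convergence of $(f_j)$. The only difference is bookkeeping: the paper works with an exhausting chain of finite substructures $\bA_i$ and thresholds $j_i$, while you use initial segments of the enumeration with thresholds $N(n)$; the argument is the same.
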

\begin{proof}
  Since $u$ is a universal homogeneous endomorphism of $\bU$, there exists a 
selfembedding $\iota$ of $\bU$ such that $f=u\circ\iota$.  

    Let $\ta=(a_i)_{i\in\omega}$ be any enumeration of $U$. 
  For every finite substructure $\bA$ of $\bU$ let $n_\bA$
  be the smallest element of $\omega$ such that $A\subseteq
  \{a_0,\dots,a_{n_\bA-1}\}$.
  
  Let $(\bA_i)_{i<\omega}$ be a sequence of finite
  substructures of $\bU$ such that $\bA_i\le\bA_j$ whenever $i\le j$
  and such that $\bigcup_i\bA_i = \bU$ (this exists because $\bU$ is
  countably infinite). Then the 
  sequence $(n_{\bA_i})_{i<\omega}$ is monotonous and unbounded.
    
  Since $(f_j)_{j<\omega}$ converges to $f$, we have that for every $i<\omega$
  there exists a $j_i<\omega$ such that for every $k>j_i$ we have that
  $D_{\ta}(f_k,f)>n_{\bA_i}$.  Without loss of generality we may assume
  that $j_i$ is chosen as small as possible.

  For $0\le k< j_0$, using the fact that $u$ is universal homogeneous, we choose
  $\iota_{k}$, such that 
  \[f_k= u\circ\iota_{k}.\]

  For $j_i\le k < j_{i+1}$, using Lemma~\ref{closeiota}, we chose
  $\iota_{k}$, such that 
  \[f_k= u\circ\iota_{k},\]
  and such that $\iota_{k}$ agrees with $\iota$ on $A_i$.

  It remains to observe that the
  sequence $(\iota_{j})_{j<\omega}$ converges to $\iota$. Let
  $\varepsilon>0$ and let $N:=
  \max(-\lfloor\log_2(\varepsilon)\rfloor,1)$. Then there exists an
  $i<\omega$, such that $\{a_0,\dots,a_{N-1}\}\subseteq A_i$. But
  then, by construction, for all $k\ge j_i$, we have that $\iota_{k}$ agrees with
  $\iota$ on $\{a_0,\dots,a_{N-1}\}$ --- in particular,
  $D_\ta(\iota_{k},\iota)\ge N$, and thus
  $d_\ta(\iota_{k},\iota)\le\varepsilon$.
\end{proof}

\begin{proposition}\label{monoidstronggateexistence}
    If $\bU$ is a countable relational structure that has  a  universal
homogeneous endomorphism, then $\End(\bU)$ has a strong gate covering. 
\end{proposition}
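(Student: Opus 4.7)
The plan is to exhibit the simplest possible strong gate covering: take $\cU:=\{\End(\bU)\}$, the cover consisting of the whole monoid as a single open set, and pick the universal homogeneous endomorphism itself as the distinguished gate element, $f_{\End(\bU)}:=u$. Since the ultrametric $d_\ta$ makes $\monoid{T}_U$ complete and $\End(\bU)$ is closed in $\monoid{T}_U$, every Cauchy sequence $(g_n)_{n\in\bN}$ in $\End(\bU)$ converges to some $g\in\End(\bU)$, so we may invoke Proposition~\ref{contiota}.

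Given such a Cauchy sequence, Proposition~\ref{contiota} furnishes a convergent sequence $(\iota_n)_{n\in\bN}$ of selfembeddings of $\bU$ with $g_n=u\circ\iota_n$ for every $n$. Set $\kappa_n:=\operatorname{id}_U$ for every $n$. Then
\[
g_n=\kappa_n\circ u\circ\iota_n,
\]
and $(\kappa_n)_{n\in\bN}$ is a constant (hence Cauchy) sequence lying in $\Aut(\bU)=\monoid{G}\subseteq\overline{\monoid{G}}$. It remains only to confirm that the Cauchy sequence $(\iota_n)_{n\in\bN}$ also lies in $\overline{\monoid{G}}$, i.e.\ that $\Emb(\bU)\subseteq\overline{\Aut(\bU)}$.

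This inclusion is the place where homogeneity of $\bU$ (which is a standing hypothesis for Proposition~\ref{univhomendo}, and therefore available here) is used. Given $\iota\in\Emb(\bU)$ and any finite substructure $\bA\le\bU$, the partial embedding $\iota\restr_A$ is an isomorphism between the finite substructures $\bA$ and $\iota(\bA)$ of $\bU$; by homogeneity it extends to an automorphism $h\in\Aut(\bU)$ with $h\restr_A=\iota\restr_A$. Since every basic neighbourhood of $\iota$ in the topology of pointwise convergence is of the form $\Phi_{\iota,A}$ for some finite $A\subseteq U$, this exhibits automorphisms arbitrarily close to $\iota$, whence $\iota\in\overline{\Aut(\bU)}$. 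Applying this to each $\iota_n$ finishes the argument.

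I expect no substantive obstacle: the proof is essentially the statement that Proposition~\ref{contiota} together with the well-known density of $\Aut(\bU)$ in $\Emb(\bU)$ for homogeneous $\bU$ yield precisely the factorisation demanded by Definition~\ref{MonoidStrongGateCovering}. The only thing one has to be mildly careful about is to invoke the ultrametric completeness so as to be allowed to apply Proposition~\ref{contiota} (which is stated for convergent, not merely Cauchy, sequences) to the given Cauchy sequence.
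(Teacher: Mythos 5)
Your argument is the paper's argument: the trivial covering $\cU=\{\End(\bU)\}$ with gate $u$, completeness of $(\End(\bU),d_\ta)$ to turn the Cauchy sequence into a convergent one, and Proposition~\ref{contiota} to produce the factorisations $g_n=u\circ\iota_n$ with $(\iota_n)$ convergent, hence Cauchy, and $\kappa_n=\operatorname{id}$. The one place where you go beyond the paper's (very terse) proof is exactly the delicate point: Definition~\ref{MonoidStrongGateCovering} demands $\iota_n\in\overline{\monoid{G}}=\overline{\Aut(\bU)}$, whereas Proposition~\ref{contiota} only delivers $\iota_n\in\Emb(\bU)$, and you close this by the standard density argument $\Emb(\bU)\subseteq\overline{\Aut(\bU)}$. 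Be aware, however, that this density argument uses homogeneity of $\bU$ itself, which is \emph{not} among the hypotheses of Proposition~\ref{monoidstronggateexistence}: the proposition only assumes that the endomorphism $u$ is universal and homogeneous, and your claim that homogeneity of $\bU$ is ``available here'' via Proposition~\ref{univhomendo} is not literally justified (that proposition characterises which homogeneous structures admit such a $u$; it does not make $\bU$ homogeneous). In every application in the paper ($(\bQ,\le)$, $(\mathbb{P},\le)$, the rational Urysohn space) $\bU$ is homogeneous, so your proof covers all the uses made of the proposition, and the paper's own one-line proof is silent on how the $\iota_n$ end up in $\overline{\monoid{G}}$ at all — so you have surfaced, rather than created, this gap; but as a proof of the statement as literally formulated, your argument needs either the extra hypothesis that $\bU$ is homogeneous or a different justification that the embeddings produced by Proposition~\ref{contiota} lie in the closure of the unit group.
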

\begin{proof}
    This is a direct consequence of Propositions~\ref{contiota}, taking
$\cU=\{\End(\bU)\}$ as an open covering of $\End(\bU)$, and using that
$(\End(\bU),d_\ta)$ is a complete metric space, for each enumeration $\ta$ of $U$.    
\end{proof}

\section{Automatic homeomorphicity}

\begin{definition}
    Let $\mathcal{K}$ be a class of structures and let $\bA\in\cK$.  We say
that $\End(\bA)$ has \emph{automatic homeomorphicity with respect to
$\mathcal{K}$} if every monoid isomorphism from $\End(\bA)$ the the
endomorphism monoid of a member of  $\mathcal{K}$ is a homeomorphism.    
\end{definition}

\begin{lemma}\label{univcontmonoid}
    Let $A$, $B$ be countable sets, and let $\monoid{M}_1\le \monoid{T}_A$,
$\monoid{M}_2\le \monoid{T}_B$ be monoids, such that $\monoid{M}_1$ has a dense
set of units. Let $\ta$ and $\tb$ be enumerations of $A$ and $B$, respectively.
 
    Let $h\colon \monoid{M}_1\to \monoid{M}_2$ be a continuous homomorphism.  
Then $h$ is uniformly continuous from $(\monoid{M}_1, d_\ta)$ to
$(\monoid{M}_2,d_\tb)$.  
\end{lemma}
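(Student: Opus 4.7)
My plan is to reduce uniform continuity to continuity at the identity $\mathrm{id}_A$, exploiting the density of the unit group together with the fact (pointed out in the paragraph on the ultrametrics $d_{\ta},d_{\tb}$) that left multiplication by an injective element is an isometry.

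Given $\varepsilon>0$, the first step is to apply continuity of $h$ at $\mathrm{id}_A$, plus the fact that a monoid homomorphism sends identity to identity, to obtain a $\delta>0$ such that for every $s\in\monoid{M}_1$ with $d_{\ta}(s,\mathrm{id}_A)<\delta$ we have $d_{\tb}(h(s),\mathrm{id}_B)<\varepsilon$. I claim that the same $\delta$ witnesses uniform continuity.

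Suppose $f,g\in\monoid{M}_1$ with $d_{\ta}(f,g)<\delta$. Using density of the units, choose a unit $u$ of $\monoid{M}_1$ with $d_{\ta}(u,f)<\delta$; by the ultrametric inequality we then also have $d_{\ta}(u,g)<\delta$. Because $u^{-1}\in\monoid{M}_1$ is a permutation of $A$ and hence injective, left-subinvariance (with equality in the injective case) gives
\[
  d_{\ta}(u^{-1}\circ f,\mathrm{id}_A)=d_{\ta}(u^{-1}\circ f,u^{-1}\circ u)=d_{\ta}(f,u)<\delta,
\]
and similarly $d_{\ta}(u^{-1}\circ g,\mathrm{id}_A)<\delta$. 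By the choice of $\delta$,
\[
  d_{\tb}(h(u^{-1}\circ f),\mathrm{id}_B)<\varepsilon,\qquad d_{\tb}(h(u^{-1}\circ g),\mathrm{id}_B)<\varepsilon.
\]
Since $h$ is a monoid homomorphism, $h(u)$ is a unit in $\monoid{M}_2$ and therefore a bijection of $B$, so left multiplication by $h(u)$ is an isometry of $(\monoid{M}_2,d_{\tb})$. Writing $h(f)=h(u)\circ h(u^{-1}\circ f)$ and $h(g)=h(u)\circ h(u^{-1}\circ g)$ and applying this isometry together with the ultrametric inequality yields
\[
  d_{\tb}(h(f),h(g))=d_{\tb}(h(u^{-1}\circ f),h(u^{-1}\circ g))\le\max\!\bigl(d_{\tb}(h(u^{-1}\circ f),\mathrm{id}_B),d_{\tb}(h(u^{-1}\circ g),\mathrm{id}_B)\bigr)<\varepsilon.
\]

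There is no serious obstacle here; the only point requiring a little care is the algebra $f=u\circ(u^{-1}\circ f)$ and the verification that $h(u)$ really is injective, both of which follow immediately from $u$ being a unit and $h$ being a monoid homomorphism. The role of the density of units is precisely to turn ``$f$ and $g$ agree on a long initial segment'' into ``$u^{-1}\circ f$ and $u^{-1}\circ g$ both lie in a small neighbourhood of the identity'', at which point pointwise continuity at $\mathrm{id}_A$ suffices.
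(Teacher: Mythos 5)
Your proof is correct and follows essentially the same route as the paper's: both reduce uniform continuity to continuity at the identity, use density of the units to find a unit agreeing with the two nearby elements on a long initial segment, translate to the identity by composing with its inverse, and translate back via the fact that left composition with the injective element $h(u)$ is an isometry, finishing with the ultrametric inequality. No changes are needed.
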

\begin{proof}
    Let $e_1$, $e_2$ be the neutral elements of $\monoid{M}_1$ and of
$\monoid{M}_2$, respectively. Let $\varepsilon>0$.   Since $h$ is continuous at
$e_1$, there exists a $\Delta\in\bN\setminus\{0\}$  such that, with  
$\delta:=2^{-\Delta}$, for all $m\in \monoid{M}_1$ with $d_\ta(m,e_1)\le
\delta$ we have $d_\tb(h(m),e_2)\le\varepsilon$.
    
    Let $m,m'\in \monoid{M}_1$ with  $d_\ta(m,m')\le \delta$. Then we have
    \[
    (m(a_0),\dots,m(a_{\Delta-1}))=(m'(a_0),\dots,m'(a_{\Delta-1}))=:\tc.
    \]
    But since the units lie dense in $\monoid{M}_1$, there exists a unit $g\in
\monoid{M}_1$ with  
    \[
        (g(a_0),\dots,g(a_{\Delta-1}))=\tc.
    \] 
    Consider now $\widetilde{m}:= g^{-1}m$ and $\widetilde{m}':=g^{-1}m'$. Then
$d_\ta(\widetilde{m},e_1)\le\delta$ and $d_\ta(\widetilde{m}',e_1)\le \delta$.  
    
    Now we compute
    \begin{align*}
        \varepsilon\ge
d_\tb(h(\widetilde{m}),e_2)&=d_\tb(h(g^{-1}m),e_2)=d_\tb(h(g)^{-1}h(m),e_2)\\ 
        &= d_\tb(h(m),h(g)) 
    \end{align*}
    In the same way we obtain $d_\tb(h(m'),h(g))\le\varepsilon$. Hence, since
$d_\tb$ is an ultrametric, we have $d_\tb(h(m),h(m'))\le\varepsilon$.  
\end{proof}

We will need the following basic facts about metric spaces and uniformly
continuous functions: 
\begin{lemma}\label{uniqueext}
    Let $(\metsp{M}_1,d_2)$ be a metric space and let $(\metsp{M}_2,d_2)$ be a
complete metric space. Then every uniformly continuous function $f$ from
$(\metsp{M}_1,d_1)$ to $(\metsp{M}_2,d_2)$ has a unique uniformly continuous
extension to the completion of $(\metsp{M}_1,d_1)$.    
\end{lemma}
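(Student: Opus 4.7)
The plan is to use the standard construction: extend $f$ to the completion by sending each limit point to the limit of the image sequence. The completion $\widetilde{\metsp{M}}_1$ of $(\metsp{M}_1,d_1)$ can be viewed as consisting of equivalence classes of Cauchy sequences in $\metsp{M}_1$, with $\metsp{M}_1$ densely embedded via constant sequences. I will define the extension, then verify well-definedness, agreement with $f$ on $\metsp{M}_1$, uniform continuity, and uniqueness.

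First, given $\tilde x\in\widetilde{\metsp{M}}_1$, pick any Cauchy sequence $(x_n)_{n\in\bN}$ in $\metsp{M}_1$ representing $\tilde x$. By uniform continuity of $f$, for every $\varepsilon>0$ there exists $\delta>0$ such that $d_1(x,y)\le\delta$ implies $d_2(f(x),f(y))\le\varepsilon$; since $(x_n)$ is Cauchy, the sequence $(f(x_n))_{n\in\bN}$ is Cauchy in $\metsp{M}_2$, and therefore converges to some point in $\metsp{M}_2$ by completeness. I would define $\tilde f(\tilde x)$ to be this limit.

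Next, I would verify well-definedness: if $(x_n)$ and $(y_n)$ are two Cauchy sequences both representing $\tilde x$, then $d_1(x_n,y_n)\to 0$, so by uniform continuity $d_2(f(x_n),f(y_n))\to 0$, and the two image sequences have the same limit. Taking the constant Cauchy sequence $(x,x,x,\dots)$ shows $\tilde f\restr_{\metsp{M}_1}=f$. For uniform continuity of $\tilde f$, fix $\varepsilon>0$ and let $\delta>0$ be chosen for $f$ and $\varepsilon/3$ via uniform continuity. If $\tilde x,\tilde y\in\widetilde{\metsp{M}}_1$ satisfy $\tilde d_1(\tilde x,\tilde y)<\delta/2$ (where $\tilde d_1$ is the canonical extension of $d_1$ to the completion), pick representing Cauchy sequences $(x_n),(y_n)$; for all sufficiently large $n$, $d_1(x_n,y_n)<\delta$, hence $d_2(f(x_n),f(y_n))\le\varepsilon/3$. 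Passing to the limit and using continuity of $d_2$ yields $d_2(\tilde f(\tilde x),\tilde f(\tilde y))\le\varepsilon/3<\varepsilon$, establishing uniform continuity.

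Finally for uniqueness, suppose $g_1$ and $g_2$ are two uniformly continuous extensions of $f$ to $\widetilde{\metsp{M}}_1$. Since $\metsp{M}_1$ is dense in $\widetilde{\metsp{M}}_1$, any $\tilde x\in\widetilde{\metsp{M}}_1$ is a limit of a sequence from $\metsp{M}_1$; applying (ordinary) continuity of $g_1$ and $g_2$ and the fact that they agree on $\metsp{M}_1$ forces $g_1(\tilde x)=g_2(\tilde x)$. The only step requiring any care is the verification of uniform continuity of $\tilde f$, where one must be careful to work with the extended metric on the completion and pass to the limit correctly; the rest consists of routine Cauchy-sequence bookkeeping.
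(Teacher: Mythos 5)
Your proof is correct: it is the standard construction of the extension via limits of images of Cauchy sequences, and the verification of well-definedness, agreement on $\metsp{M}_1$, uniform continuity (using the extended metric on the completion and continuity of $d_2$), and uniqueness via density are all sound. The paper itself gives no proof of this lemma, treating it as a standard fact about metric spaces, so your argument simply supplies the routine details that the authors omitted.
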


\begin{lemma}
    Let $\Met$ be the category of metric spaces with uniformly continuous
functions. Let $\cMet$ be the full subcategory of $\Met$ spanned by all
complete metric spaces. Let $U\colon \cMet\injto\Met$ be the inclusion functor.
Then $U$ has a left-adjoint functor $C$, mapping each metric space $\metsp{M}$
to its completion $\overline{\metsp{M}}$ and every uniformly continuous
function $f\colon \metsp{M}_1\to\metsp{M}_2$ to its unique extension
$\hat{f}\colon \overline{\metsp{M}}_1 \to\overline{\metsp{M}}_2$.      
\end{lemma}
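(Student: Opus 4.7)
The plan is to define $C$ on morphisms via Lemma~\ref{uniqueext} and then exhibit the unit of the adjunction as the family of canonical isometric embeddings $\eta_\metsp{M}\colon \metsp{M}\injto\overline{\metsp{M}}$. Essentially everything reduces to uniqueness of uniformly continuous extensions to a complete codomain.

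First I would verify functoriality of $C$. Given a uniformly continuous $f\colon \metsp{M}_1\to \metsp{M}_2$, the composite $\eta_{\metsp{M}_2}\circ f\colon \metsp{M}_1\to\overline{\metsp{M}}_2$ is a uniformly continuous map into a complete space, so Lemma~\ref{uniqueext} produces a unique uniformly continuous $\hat f\colon\overline{\metsp{M}}_1\to\overline{\metsp{M}}_2$ extending it; set $C(f):=\hat f$. Preservation of identities is immediate: $\mathrm{id}_{\overline{\metsp{M}}}$ extends $\eta_\metsp{M}$, so by uniqueness $\widehat{\mathrm{id}_\metsp{M}}=\mathrm{id}_{\overline{\metsp{M}}}$. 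For composition, both $\widehat{g\circ f}$ and $\hat g\circ \hat f$ are uniformly continuous extensions of $\eta_{\metsp{M}_3}\circ g\circ f$, so they coincide by uniqueness.

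Next I would construct the adjunction via its universal property. Take as candidate unit the canonical embeddings $\eta_\metsp{M}\colon\metsp{M}\to UC(\metsp{M})=\overline{\metsp{M}}$; naturality of $\eta$ amounts to $\hat f\circ\eta_{\metsp{M}_1}=\eta_{\metsp{M}_2}\circ f$, which holds by the very definition of $\hat f$. It then suffices to check that for every $\metsp{M}\in\Met$, every complete $\metsp{N}\in\cMet$, and every uniformly continuous $f\colon \metsp{M}\to U(\metsp{N})$, there exists a unique uniformly continuous $\bar f\colon C(\metsp{M})\to \metsp{N}$ with $U(\bar f)\circ\eta_\metsp{M}=f$. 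But this is precisely the statement of Lemma~\ref{uniqueext} applied to $f$ viewed as a map into the complete space $\metsp{N}$. The resulting bijection
\[
\mathrm{Hom}_\cMet(C(\metsp{M}),\metsp{N})\;\cong\;\mathrm{Hom}_\Met(\metsp{M},U(\metsp{N}))
\]
is then automatic; its naturality in both variables again follows from uniqueness, since two uniformly continuous maps from $\overline{\metsp{M}}$ to a complete space agreeing on the dense image of $\eta_\metsp{M}$ must be equal.

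I do not expect any genuine obstacle: the content of the lemma is almost entirely contained in Lemma~\ref{uniqueext}, and the remaining work is bookkeeping of commutative diagrams. The only step where I would be slightly careful is checking that the extension operation respects composition, since one has to remember to insert the canonical embeddings $\eta$ when matching domains and codomains; once the diagrams are set up correctly, uniqueness closes every argument.
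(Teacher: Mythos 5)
Your proof is correct. Note that the paper does not actually prove this lemma: it dismisses it as folklore with a pointer to Mac Lane (page 92), where the completion functor appears as the standard example of an adjunction arising from universal arrows. Your write-up is exactly the expansion of that folklore argument: you define $C(f)$ as the unique uniformly continuous extension of $\eta_{\metsp{M}_2}\circ f$ guaranteed by Lemma~\ref{uniqueext}, check functoriality by uniqueness of extensions, and then verify that each canonical embedding $\eta_{\metsp{M}}\colon\metsp{M}\to\overline{\metsp{M}}$ is a universal arrow from $\metsp{M}$ to the inclusion $U$, which by the standard characterization of adjunctions yields $C\dashv U$ with unit $\eta$. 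All the individual steps (the composite $\hat g\circ\hat f$ extending $\eta_{\metsp{M}_3}\circ g\circ f$, uniqueness forcing agreement of any two uniformly continuous maps out of $\overline{\metsp{M}}$ that agree on the dense image of $\eta_{\metsp{M}}$, and naturality of the hom-set bijection) are handled correctly, so your argument is a complete and faithful substitute for the omitted proof.
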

\begin{proof}
Folklore, cf.\ \cite[Page 92]{Mac98}
\end{proof}

\begin{lemma}\label{closurecompl}
    Let $A$ be a countably infinite set and let $\monoid{G}$ be a closed
subgroup of $\monoid{S}_A$.  Let  $\ta=(a_i)_{i<\omega}$ be an enumeration of $A$.  
    Then the closure of $\monoid{G}$ in $\monoid{T}_A$ coincides with the
Cauchy-completion of $\monoid{G}$ in $(\monoid{T}_A,d_\ta)$.  
\end{lemma}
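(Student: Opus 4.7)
The plan is to reduce the claim to two standard metric-topological facts. First, I would verify that $(\monoid{T}_A, d_\ta)$ is itself a complete metric space: since $A$ carries the discrete topology, any $d_\ta$-Cauchy sequence $(f_n)_{n<\omega}$ in $\monoid{T}_A$ eventually stabilizes, for each fixed $i<\omega$, on the value $f_n(a_i)$, and these stable values assemble into a function $f\in \monoid{T}_A$ that is the $d_\ta$-limit of $(f_n)_{n<\omega}$. Second, I would observe that the topology induced by $d_\ta$ on $\monoid{T}_A$ is precisely the topology of pointwise convergence used in the ambient statement, so the topological closure of $\monoid{G}$ in $\monoid{T}_A$ coincides with the set of $d_\ta$-limits of $d_\ta$-convergent sequences from $\monoid{G}$.

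Given these two facts, what remains is the general metric-space observation that the Cauchy-completion of a subspace of a complete metric space is realized by its closure in that ambient space. I would prove the two inclusions separately. For the inclusion closure $\subseteq$ Cauchy-completion: any $f$ in the closure of $\monoid{G}$ in $\monoid{T}_A$ is the $d_\ta$-limit of a sequence from $\monoid{G}$, and every convergent sequence in a metric space is automatically Cauchy, so $f$ lies in the Cauchy-completion. For the reverse inclusion: any Cauchy sequence $(g_n)_{n<\omega}$ from $\monoid{G}$ has a $d_\ta$-limit in $\monoid{T}_A$ by the completeness just established, and that limit lies in the topological closure of $\monoid{G}$ in $\monoid{T}_A$ since it is a pointwise limit of elements of $\monoid{G}$.

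The hypothesis that $\monoid{G}$ is a closed subgroup of $\monoid{S}_A$ plays no role in establishing this equality; it is presumably included because in the intended downstream applications (for instance invoking Lemma~\ref{uniqueext} to extend a uniformly continuous homomorphism from $\monoid{G}$ to $\overline{\monoid{G}}$) one wants to know that the closure $\overline{\monoid{G}}$ is legitimately both a topological closure and a metric completion. I do not anticipate any genuine obstacle: the lemma is a bookkeeping step, and the only potentially subtle point is being careful that ``Cauchy-completion \emph{in} $(\monoid{T}_A,d_\ta)$'' is interpreted concretely as the set of $d_\ta$-limits of Cauchy sequences from $\monoid{G}$, so that both objects under comparison are naturally subsets of $\monoid{T}_A$.
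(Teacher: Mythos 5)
Your proof is correct and takes essentially the same route as the paper: the paper's own proof is a one-liner invoking the completeness of $(\monoid{T}_A,d_\ta)$ together with the standard correspondence between closed and complete subspaces of a complete metric space, which is exactly what you establish, only with the two inclusions spelled out explicitly. Your side remark that the closedness of $\monoid{G}$ in $\monoid{S}_A$ is not needed for this particular equality is also accurate.
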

\begin{proof}
    This follows immediately from the fact that $(\monoid{T}_A,d_\ta)$ is a
complete metric space, and that complete subspaces of complete metric spaces
are closed, and, vice versa, closed subspaces of complete metric spaces are complete.   
\end{proof}

\begin{proposition}\label{monoidautocontcriterion}
    Let $\bA$ and $\bB$ be two countable relational structures, such that
$\End(\bB)$ has a strong gate covering. 
    Let $h\colon \End(\bA)\to\End(\bB)$ be a continuous monoid-isomorphism.
Then $h$ is a homeomorphism. 
\end{proposition}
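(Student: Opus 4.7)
The plan is to establish sequential continuity of $h^{-1}\colon\End(\bB)\to\End(\bA)$ by using the strong gate covering of $\End(\bB)$ to reduce the problem to continuity on the closure $\overline{\Aut(\bB)}$ of the automorphism group. Indeed, the group of units of $\End(\bB)$ is precisely $\Aut(\bB)$, so the ``sides'' of the gate-decomposition always live in $\overline{\Aut(\bB)}$. Once continuity there is established, joint continuity of composition in the pointwise-convergence topology (which holds because the underlying sets are discrete) will glue the factors back together.

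The first step is to observe that the units of both $\End(\bA)$ and $\End(\bB)$ are the respective automorphism groups, so $h$ restricts to a \emph{continuous} group isomorphism $h|_{\Aut(\bA)}\colon\Aut(\bA)\to\Aut(\bB)$. By Lascar's theorem on continuous isomorphisms between automorphism groups of countable structures (cited in the introduction), this restriction is automatically a homeomorphism. Hence $h^{-1}|_{\Aut(\bB)}$ is a continuous monoid homomorphism out of a monoid whose units are (trivially) dense, and Lemma~\ref{univcontmonoid} then upgrades this to uniform continuity.

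Next I would promote continuity of $h^{-1}$ from $\Aut(\bB)$ to $\overline{\Aut(\bB)}$, which by Lemma~\ref{closurecompl} coincides with the Cauchy completion of $\Aut(\bB)$. By Lemma~\ref{uniqueext}, the uniformly continuous map $h^{-1}|_{\Aut(\bB)}$ has a unique uniformly continuous extension $\Phi\colon\overline{\Aut(\bB)}\to\End(\bA)$. To identify $\Phi$ with the restriction of $h^{-1}$ itself, take $g\in\overline{\Aut(\bB)}$ and $g_n\in\Aut(\bB)$ with $g_n\to g$; then $h^{-1}(g_n)\to\Phi(g)$, and continuity of $h$ gives $h(\Phi(g))=\lim g_n=g$, whence $\Phi(g)=h^{-1}(g)$. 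Thus $h^{-1}$ is (uniformly) continuous on $\overline{\Aut(\bB)}$.

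For the main step, let $g_n\to g$ in $\End(\bB)$. Since $\cU$ is an open cover, $g\in U$ for some $U\in\cU$ and $g_n\in U$ eventually; the Cauchy sequence $(g_n)$ then decomposes by the gate-covering property as $g_n=\kappa_n\circ f_U\circ\iota_n$ with Cauchy sequences $(\kappa_n),(\iota_n)$ in $\overline{\Aut(\bB)}$. By completeness these converge to some $\kappa,\iota\in\overline{\Aut(\bB)}$, and joint continuity of composition yields $g=\kappa\circ f_U\circ\iota$. Applying the monoid homomorphism $h^{-1}$, using its continuity on $\overline{\Aut(\bB)}$, and once more joint continuity of composition in $\End(\bA)$,
\[
h^{-1}(g_n)=h^{-1}(\kappa_n)\circ h^{-1}(f_U)\circ h^{-1}(\iota_n)\longrightarrow h^{-1}(\kappa)\circ h^{-1}(f_U)\circ h^{-1}(\iota)=h^{-1}(g),
\]
which is the desired sequential continuity. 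I expect the main obstacle to be the second step, transferring continuity from $\Aut(\bB)$ to its closure: this is the only genuinely topological move and is where Lascar's theorem, Lemma~\ref{univcontmonoid}, Lemma~\ref{closurecompl}, and Lemma~\ref{uniqueext} all have to be brought to bear simultaneously. Once this is in place, the strong gate covering reduces the remaining argument to a short algebraic unwinding.
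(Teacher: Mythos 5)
Your proof is correct and follows essentially the same route as the paper's: Lascar's theorem together with Lemma~\ref{univcontmonoid}, Lemma~\ref{closurecompl} and Lemma~\ref{uniqueext} to get (uniform) continuity of $h^{-1}$ on $\overline{\Aut(\bB)}$, and then the strong gate covering plus joint continuity of composition to obtain sequential continuity of $h^{-1}$ on all of $\End(\bB)$. The only (harmless) variation is that you extend $h^{-1}\restr_{\Aut(\bB)}$ directly and identify the extension with $h^{-1}$ via continuity of $h$, whereas the paper extends $f=h\restr_{\Aut(\bA)}$ by the completion functor and identifies that extension with $h\restr_{\overline{\Aut(\bA)}}$ before inverting.
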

Before coming to the proof, let us recall the similar result by  Lascar for closed permutation groups:
\begin{proposition}[{\cite[Corollary 2.8]{Las91}}]\label{propLascar}
Let $\bA$ and $\bB$ be countable structures and let $f:\Aut(\bA)\to\Aut(\bB)$ be a continuous isomorphism. Then $f$ is a homeomorphism. 
\end{proposition}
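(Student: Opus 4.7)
The plan is to show that the continuous isomorphism $f$ is in fact open; combined with the continuity hypothesis this will yield that $f$ is a homeomorphism. Because $\bA$ and $\bB$ are countable, the groups $\Aut(\bA)$ and $\Aut(\bB)$ are closed subgroups of $\monoid{S}_A$ and $\monoid{S}_B$ respectively, and so are Polish topological groups: the topology of pointwise convergence is induced by a complete ultrametric (of the type $d_\ta$ described in the preliminaries), and the groups are separable.

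A basis of open neighborhoods of the identity in $\Aut(\bA)$ is given by the pointwise stabilizers $V_\ta:=\{g\in\Aut(\bA)\mid g\restr_{\{a_1,\dots,a_n\}}=\mathrm{id}\}$ of finite tuples $\ta=(a_1,\dots,a_n)$ from $A$. Each such $V_\ta$ is an open subgroup of $\Aut(\bA)$, and since the orbit of $\ta$ is a subset of the countable set $A^n$, the index $[\Aut(\bA):V_\ta]$ is countable. By the homomorphism property, openness of $f$ at the identity implies openness everywhere; so I would fix such $V=V_\ta$ and show that $f(V)$ contains an open neighborhood of the identity in $\Aut(\bB)$.

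Write $\Aut(\bA)=\bigsqcup_{n\in\bN} g_n V$ as a countable disjoint union of cosets; applying $f$ and using injectivity yields $\Aut(\bB)=f(\Aut(\bA))=\bigsqcup_{n\in\bN} f(g_n)\,f(V)$. Since $V$ is a Borel (in fact open) subset of the Polish group $\Aut(\bA)$ and $f$ is continuous, $f(V)$ is an analytic subset of the Polish group $\Aut(\bB)$; in particular, it has the Baire property. Because $\Aut(\bB)$ is a Baire space covered by countably many translates of $f(V)$, and translation is a homeomorphism (so it preserves meagerness), at least one such translate must be non-meager, whence $f(V)$ itself is non-meager. Pettis's theorem applied to the subgroup $f(V)$ then shows that $f(V)^{-1} f(V)$ contains a neighborhood of the identity of $\Aut(\bB)$; but $V$ is a subgroup, so
\[
f(V)^{-1} f(V) \;=\; f(V^{-1}V) \;=\; f(V),
\]
and $f(V)$ itself is the required neighborhood.

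The substantive content is packaged into the appeal to Pettis's theorem together with the fact that analytic subsets of Polish spaces have the Baire property; these are classical descriptive-set-theoretic results that encapsulate an ``open mapping'' phenomenon for Polish groups. The chief obstacle is thus to import this machinery, rather than anything specific to automorphism groups: once available, the two features we exploit --- that pointwise stabilizers of finite tuples form a neighborhood base of the identity by open subgroups, and that these open subgroups have countable index because $\bA$ is countable --- make the application essentially routine.
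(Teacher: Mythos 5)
Your argument is correct in substance, but note that the paper does not prove this statement at all: it is quoted verbatim as Corollary~2.8 of Lascar's paper, so there is no internal proof to compare against. What you give is the standard descriptive-set-theoretic open-mapping argument for non-archimedean Polish groups --- pointwise stabilizers of finite tuples form a base at the identity consisting of open subgroups of countable index (countable because the orbit of a tuple lies in $A^n$ with $A$ countable), the image of such a subgroup under the continuous isomorphism is analytic, hence has the Baire property, is non-meager because countably many of its translates cover the Baire space $\Aut(\bB)$, and Pettis's theorem together with $f(V)^{-1}f(V)=f(V)$ makes it open --- which is essentially the classical route underlying Lascar's corollary; so in effect you have unpacked the citation rather than found a genuinely different proof, and the appeal to Pettis plus the Baire property of analytic sets is a legitimate way to package the core content. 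One small inaccuracy should be repaired: the ultrametric $d_\ta$ from the preliminaries is \emph{not} complete on $\Aut(\bA)$ (a $d_\ta$-Cauchy sequence of automorphisms may converge in $\monoid{T}_A$ to a non-surjective self-embedding; this is precisely why the paper works with the closure $\overline{\Aut(\bA)}$ in later sections), so you should not justify Polishness by saying the topology is induced by a complete ultrametric of that type. Instead, use that $\Aut(\bA)$ and $\Aut(\bB)$ are closed subgroups of $\monoid{S}_A$ and $\monoid{S}_B$, which are Polish groups (e.g.\ as $G_\delta$ subsets of $\monoid{T}_A$, $\monoid{T}_B$, or by remetrizing with $d_\ta(f,g)+d_\ta(f^{-1},g^{-1})$); since Polishness, Baireness, and analyticity of continuous images are purely topological notions, the rest of your argument goes through unchanged.
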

\begin{proof}[Proof of Proposition \ref{monoidautocontcriterion}]
    Let $\ta$ and $\tb$ be enumerations of $A$ and $B$, respectively. Let 
$d_\ta$ and $d_\tb$ be the ultrametrics induced by $\ta$ and $\tb$ in
$\monoid{T}_A$ and $\monoid{T}_B$, respectively. Let further $f:=
h\restr_{\Aut(\bA)}$.    
    
    Since $h$ is continuous, we have that $f\colon \Aut(\bA)\to\Aut(\bB)$ is
continuous, too.  
    By Proposition~\ref{propLascar}, $f$ is a homeomorphism.
    Thus, by Lemma~\ref{univcontmonoid}, $f\colon
(\Aut(\bA),d_\ta)\to(\Aut(\bB),d_\tb)$  and $f^{-1}\colon
(\Aut(\bB),d_\tb)\to(\Aut(\bA),d_\ta)$ are uniformly continuous, i.e., they are
isomorphisms in the category $\Met$.    
    
    Let $\hat{f}:=C(f)$ be the unique extension of $f$ to
$\overline{\Aut(\bA)}$ as a uniformly continuous function. Then $\hat{f}$ is an
isomorphism in the category $\cMet$ and $C(f^{-1})=C(f)^{-1}$.

    Let now $g:=h\restr_{\overline{\Aut(\bA)}}$. Since $h$ is continuous, we have that $g:\overline{\Aut(\bA)}\to\overline{\Aut(\bB)}$ is continuous, too. Thus, from Lemma~\ref{univcontmonoid} we conclude that $g:(\overline{\Aut(\bA)},d_A)\to (\overline{\Aut(\bB)},d_B)$ is uniformly continuous. 
Since, clearly, we have $g\restr_{\Aut(\bA)}=f$, we conclude from Lemma~\ref{uniqueext}, that $g=C(f)=\hat{f}$. Thus $g:\overline{\Aut(\bA)}\to\overline{\Aut(\bB)}$ is a homeomorphism. 

Now we are ready to show that $h^{-1}$ is continuous:
Let $(v_n)_{n\in\bN}$ be a Cauchy-sequence of endomorphisms of $\bB$. Since $(\End(\bB),d_\tb)$ is complete, $(v_n)_{n\in\bN}$ is convergent --- say to $v\in\End(\bB)$. 
       
Let $(\cU,(f_U)_{U\in\cU})$ be a strong gate covering of $\End(\bB)$. Then there exists a $U\in\cU$ and an $n_0\in\bN$ such that for all $n\ge n_0$ we have $v_n\in U$. Without loss of generality, assume that $n_0=0$. By the definition of strong gate coverings there exist Cauchy-sequences $(\kappa_n)_{n\in\bN}$ and $(\iota_n)_{n\in\bN}$ in $\overline{\Aut(\bB)}$, such that 
$v_n = \kappa_n\circ f_U \circ\iota_n$,
for all $n\in\bN$. In particular, with \[\kappa=\lim_{n\to\infty}\kappa_n \text{ and } \iota=\lim_{n\to\infty} \iota_n,\] we have 
$v= \kappa\circ f_U\circ\iota$.
Because $g^{-1}$ is continuous, and since $g^{-1}=(h^{-1})\restr_{\overline{\Aut(\bB)}}$,  we have 
\[\lim_{n\to\infty} h^{-1}(\kappa_n)=h^{-1}(\kappa) \text{ and } \lim_{n\to\infty}h^{-1}(\iota_n) = h^{-1}(\iota).\]  

% note that v_n = \kappa_n\circ f_U\circ\langle \iota^1_n\circ e_1^k,\dots, \iota^k_n\circ e_k^k\rangle.
Now, since $h^{-1}$ is a monoid-isomorphism, we have 
\[ h^{-1}(v_n) = h^{-1}(\kappa_n)\circ h^{-1}(f_U) \circ h^{-1}(\iota_n).\]
Thus, since the composition of functions is continuous, we have that the sequence $(h^{-1}(v_n))_{n\in\bN}$ converges to $h^{-1}(v)$. From this, it follows that $h^{-1}$ is continuous. 
% TODO study continuity of composition
\end{proof}

In the following we are going to adapt \cite[Proposition 27]{BodPinPon13} to 
the case of transformation monoids. In order to do so, we have to make a few
preparations: 
\begin{definition}
    Let $A$ be a countably infinite set and let $\monoid{M}\le\monoid{T}_A$.
For $a,b\in A$ define $a\preceq b$ if there exists some $h\in\monoid{M}$, such
that $h(b)=a$. Let $\sim$ be the closure of $(\preceq)$ to an equivalence
relation on $A$. Then the equivalence classes of $\sim$ will be called
\emph{weak orbits} of $\monoid{M}$ on $A$.    
\end{definition}

\begin{lemma}\label{star}
    Let $A$ and $B$  be sets, let $\varrho\subseteq A^2$ be a relation, and let $f\colon A\to B$ be a function , such that $\varrho\subseteq\ker f$. Then the closure $\varrho^{\operatorname{eq}}$ of $\varrho$ to an equivalence relation is contained in $\ker f$, too.
\end{lemma}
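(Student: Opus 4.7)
The plan is to exploit the trivial observation that $\ker f$ is itself an equivalence relation on $A$. Indeed, $\ker f = \{(a,a') \in A^2 \mid f(a) = f(a')\}$ is reflexive, symmetric, and transitive as a direct consequence of the reflexivity, symmetry, and transitivity of equality in $B$. Once this is noted, the lemma becomes essentially a matter of definition.

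Concretely, I would characterize $\varrho^{\operatorname{eq}}$ as the intersection of all equivalence relations on $A$ that contain $\varrho$, or equivalently as the smallest such relation. Since by hypothesis $\varrho \subseteq \ker f$ and, as observed, $\ker f$ is an equivalence relation on $A$, the set $\ker f$ is one of the equivalence relations being intersected (or, in the minimality formulation, one of the upper bounds for $\varrho^{\operatorname{eq}}$). Therefore $\varrho^{\operatorname{eq}} \subseteq \ker f$.

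If a more explicit argument is preferred, I would instead proceed by induction on the standard construction of the equivalence closure: $\varrho^{\operatorname{eq}}$ consists of all pairs $(a,b) \in A^2$ for which there is a finite chain $a = a_0, a_1, \ldots, a_n = b$ in $A$ with the property that for each $i < n$ either $a_i = a_{i+1}$, or $(a_i, a_{i+1}) \in \varrho$, or $(a_{i+1}, a_i) \in \varrho$. In each of these three cases the hypothesis $\varrho \subseteq \ker f$ gives $f(a_i) = f(a_{i+1})$, and transitivity of equality then yields $f(a) = f(b)$, i.e.\ $(a,b) \in \ker f$.

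There is no real obstacle here; the entire content of the lemma is the remark that kernels of functions are equivalence relations, so the closure operation cannot escape them. The lemma is being stated separately only because it will be invoked as a black box in the sequel (presumably in the analysis of weak orbits preceding it).
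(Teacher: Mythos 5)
Your proof is correct and is essentially the paper's argument: the paper deduces $\varrho^{\operatorname{eq}}\subseteq(\ker f)^{\operatorname{eq}}=\ker f$ from monotonicity and idempotence of the closure operator, which is just another phrasing of your observation that $\ker f$ is an equivalence relation containing $\varrho$ and hence contains the smallest such relation.
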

\begin{proof}
    This follows from the fact that the operator $-^{\operatorname{eq}}$ is monotonous and idempotent. In particular we have
    \[ \varrho^{\operatorname{eq}}\subseteq(\ker f)^{\operatorname{eq}} = \ker f.\qedhere\]
\end{proof}

\begin{proposition}\label{monoidbpopen}
    Let $\bA$ and $\bB$ be structures such that $\End(\bA)$ contains all constant functions and such that $\End(\bB)$ has only finitely many weak orbits on $B$. Then every monoid-isomorphism from $\End(\bA)$ to $\End(\bB)$ is open.
\end{proposition}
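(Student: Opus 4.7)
The plan is to characterise the basic open neighbourhoods of $\End(\bA)$ algebraically in terms of the constants, transport this characterisation through the monoid isomorphism $h$, and then exploit the finite weak orbit structure on $B$ to see that the resulting sets are still open in $\End(\bB)$. The starting observation is that, for every $a\in A$, the constant function $c_a$ is a \emph{left zero} of $\End(\bA)$ in the sense that $c_a\circ f=c_a$ for all $f\in\End(\bA)$. Being a left zero is a purely algebraic property, preserved by monoid isomorphisms, so each $e_a:=h(c_a)$ is a left zero of $\End(\bB)$.

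Next I would verify that every left zero $e\in\End(\bB)$ has finite image. Indeed, if $a\preceq b$ in the sense of the weak orbit definition --- that is, $a=g(b)$ for some $g\in\End(\bB)$ --- then $e(a)=(e\circ g)(b)=e(b)$, so the relation $\preceq$ is contained in $\ker e$. Lemma~\ref{star} upgrades this to $\sim\,\subseteq\,\ker e$, so $e$ factors through the quotient by $\sim$; since $\End(\bB)$ has only finitely many weak orbits, the image of $e$ is finite.

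Finally, using the identity $f\circ c_a=c_{f(a)}$ together with $c_x=c_y\iff x=y$, each basic open set in $\End(\bA)$ admits the algebraic description
\[
  \Phi_{h_0,M}=\bigcap_{a\in M}\bigl\{f\in\End(\bA):f\circ c_a=c_{h_0(a)}\bigr\}.
\]
Applying $h$, its image equals $\bigcap_{a\in M}\{g\in\End(\bB):g\circ e_a=e_{h_0(a)}\}$. Because $e_a$ has a finite image, say $\{y_1,\dots,y_k\}\subseteq B$, the equation $g\circ e_a=e_{h_0(a)}$ reduces to the finitely many pointwise requirements $g(y_i)=z_i$ (where $z_i$ is the value taken by $e_{h_0(a)}$ on any preimage of $y_i$ under $e_a$), and is empty if these values are inconsistent. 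In either case this defines an open --- indeed clopen --- subset of $\End(\bB)$, and a finite intersection of open sets is open. Hence $h$ carries basic open sets, and thus all open sets, to open sets. The only mildly delicate step is the second one, where the algebra of left zeros has to be tied to the weak orbit structure, and Lemma~\ref{star} is precisely the tool that bridges this gap.
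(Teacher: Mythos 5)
Your proposal is correct and follows essentially the same route as the paper: encode the pointwise conditions via the constants (left zeros), transport them through $h$, and use Lemma~\ref{star} together with the finiteness of the weak orbits to reduce the transported equation $g\circ h(c_a)=h(c_b)$ to finitely many pointwise constraints, hence an open set. The only cosmetic difference is that you perform this reduction via the finite image of $h(c_a)$ and consistency on its fibres, whereas the paper evaluates on a transversal of the weak orbits; both steps rest on the same facts.
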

\begin{proof}
    Let $h: \End(\bA)\to\End(\bB)$ be a monoid-homomorphism. Let $a,b\in A$, and let $U=\{f\in\End(\bA)\mid f(a)=b\}$. 
    For $d\in A$ denote by $c_d$ the constant endomorphism of $\bA$ that maps everything to $d$. Then we have $U=\{f\in\End(\bA)\mid c_b = f \circ c_a\}$. Since $h$ is a monoid-isomorphism, we have that $h(U)=\{g\in\End(\bB)\mid h(c_b) = g\circ h(c_a)\}$. 
    
      Note that $c_a$ and $c_b$ are left-zeros in $\End(\bA)$. Thus, since $h$ is a monoid-isomorphism,  we have that $h(c_a)$ and $h(c_b)$ are left-zeros in $\End(\bB)$. It follows that $h(c_a)$ and $h(c_b)$ are constant on weak orbits of $\End(\bB)$ on $B$. Indeed, let $x\in B$, $g\in\End(\bB)$, and let $y:=g(x)$. Then $h(c_a)=h(c_a)\circ g$. Hence $h(c_a)(x)= h(c_a)(g(x))=h(c_a)(y)$. In other words, $(\preceq)\subseteq\ker(h(c_a))$. Hence, by Lemma~\ref{star}, 
      \[(\sim)=(\preceq)^{\operatorname{eq}}\subseteq \ker(h(c_a)),\] and the claim follows. 
      
       Let  $\{o_1,\dots,o_k\}$ be a transversal of the weak  orbits of $\End(\bB)$ on $B$.  Then we have for every $g\in\End(\bB)$ that $h(c_b)=g\circ h(c_a)$ if and only if $h(c_b)(o_i)=g(h(c_a)(o_i))$, for all $i\in\{1,\dots,k\}$. In other words, with $a_i=h(c_a)(o_i)$ and $b_i=h(c_b)(o_i)$ ($i=1,\dots,k$), we have
      \[
      h(U)=\{g\in\End(\bB)\mid g(a_i)=b_i,\, i=1,\dots,k\}.
      \] 
      Thus $h(U)$ is a finite intersection of basic open sets in $\End(\bB)$. Consequently, $h(U)$ is open.
\end{proof}

\begin{remark}
    Note that for every transformation monoid $\monoid{M}\le\monoid{T}_B$ we
have that if the group $\monoid{G}$ of units in $\monoid{M}$ is oligomorphic,
then $\monoid{M}$ has only finitely many weak orbits on $B$. On the other hand,
the monoid of non-expansive selfmaps of the rational Urysohn-space has just one
weak orbit but it automorphism group is not oligoorphic. Thus we have that the
class of countable structures whose endomorphism monoid has only finitely many
weak orbits properly contains the class of $\omega$-categorical structures.    

\end{remark}

\begin{theorem}\label{reconQ}
    Let $\bB$ be a countable structure, such that $\End(\bB)$ has only finitely many weak orbits on $B$, and let $h\colon \End(\bQ,\le)\to\End(\bB)$ be a monoid-isomorphism. Then $h$ is a homeomorphism. 
\end{theorem}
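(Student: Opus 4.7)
The plan is to recognize the theorem as an assembly of three earlier results specialized to $(\bQ,\le)$: Proposition~\ref{monoidbpopen} (constants together with finitely many weak orbits force openness of any monoid-isomorphism), Proposition~\ref{monoidautocontcriterion} (a continuous monoid-isomorphism whose target has a strong gate covering is automatically a homeomorphism), and Proposition~\ref{monoidstronggateexistence} (a universal homogeneous endomorphism yields a strong gate covering on the endomorphism monoid). Two structural features of $(\bQ,\le)$ are all that will be needed: every constant map $c_r\colon x\mapsto r$ is an endomorphism of $(\bQ,\le)$, since $r\le r$ holds trivially; and $(\bQ,\le)$ is listed in Example~\ref{exAEP} as a structure possessing a universal homogeneous endomorphism, so by Proposition~\ref{monoidstronggateexistence} the monoid $\End(\bQ,\le)$ admits a strong gate covering.

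First I would show that $h$ is open. Taking $\bA:=(\bQ,\le)$, the hypothesis of Proposition~\ref{monoidbpopen} that $\End(\bA)$ contains all constant maps is satisfied, and $\End(\bB)$ has only finitely many weak orbits by the assumption of the theorem. The proposition therefore yields that every monoid-isomorphism $\End(\bQ,\le)\to\End(\bB)$ is open; in particular, our $h$ is open, which is equivalent to the statement that $h^{-1}\colon\End(\bB)\to\End(\bQ,\le)$ is continuous.

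It then remains to establish continuity of $h$ itself. I would apply Proposition~\ref{monoidautocontcriterion} to the reversed monoid-isomorphism $h^{-1}\colon\End(\bB)\to\End(\bQ,\le)$. Its codomain $\End(\bQ,\le)$ has a strong gate covering by the preliminary observation, and $h^{-1}$ is continuous by the previous step. The proposition thus upgrades $h^{-1}$ to a homeomorphism, whence so is $h$. I do not anticipate any real obstacle here: all of the genuine analytic and combinatorial content, in particular the use of Lascar's theorem on permutation groups hidden inside Proposition~\ref{monoidautocontcriterion} and the construction of universal homogeneous endomorphisms of $(\bQ,\le)$, has already been discharged in the earlier sections; the proof reduces to verifying the two listed features of $(\bQ,\le)$ and invoking the two propositions in the correct order.
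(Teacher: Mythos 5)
Your proposal is correct and follows essentially the same route as the paper: openness of $h$ via Proposition~\ref{monoidbpopen} (constants in $\End(\bQ,\le)$ plus finitely many weak orbits of $\End(\bB)$), a universal homogeneous endomorphism of $(\bQ,\le)$ giving a strong gate covering of $\End(\bQ,\le)$ via Proposition~\ref{monoidstronggateexistence}, and then Proposition~\ref{monoidautocontcriterion} applied to the continuous isomorphism $h^{-1}$ to conclude that $h^{-1}$, hence $h$, is a homeomorphism. Your version merely makes explicit the application to $h^{-1}$ (which the paper leaves implicit) and cites Example~\ref{exAEP} in place of the paper's direct appeal to Kubi\'s's \AEP result and the \HAP of finite linear orders, which amounts to the same justification.
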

\begin{proof}
    Clearly, every constant function on $\bQ$ is an endomorphism of $(\bQ,\le)$. Thus, by Proposition~\ref{monoidbpopen}, $h$ is open. 
%    By a result by Truss \cite[Theorem 3.5]{Tru89}, the automorphism group of $(\bQ,\le)$ has the small index property. Thus, $\Aut(\bQ,\le)$ has automatic homeomorphicity. 
%    It follows that the restriction of $h$ to $\Aut(\bQ,\le)$ is continuous.
    
    It was shown by Kubi\'s \cite[Proposition 3.23]{Kub13} that the class of finite linear orders has the \AEP. It is known (cf. \cite{Mas07,CamLoc10}) that $(\bQ,\le)$ is homomorphism-homogeneous. Thus, the class of finite linear orders has the \HAP.  Thus, by Proposition~\ref{univhomendo}, $(\bQ,\le)$ has a universal homogeneous endomorphism (this follows also from an earlier result \cite[Proposition 4.7]{PecPec13a}).
    
    Now, by Proposition~\ref{monoidstronggateexistence}, $\End(\bQ,\le)$ has a strong gate covering. Finally, by Proposition~\ref{monoidautocontcriterion}, $h$ is continuous. Altogether we have that $h$ is a homeomorphism.
\end{proof} 

\begin{corollary}
    The endomorphism monoid $\End(\bQ,\le)$ has automatic homeomorphicity with respect to the class of countable posets.
\end{corollary}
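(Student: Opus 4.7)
The plan is to reduce the corollary directly to Theorem~\ref{reconQ}. That theorem already asserts automatic homeomorphicity of $\End(\bQ,\le)$ with respect to the class of countable structures whose endomorphism monoid has only finitely many weak orbits. So the only thing to verify is that every countable poset $\bB$ lies in this wider class; the rest is just an invocation of the theorem.

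To check this, I would observe that in a poset $(B,\le)$, every constant map $c_b\colon B\to B$ is order-preserving in the trivial way ($c_b(x)=b\le b=c_b(y)$ for all $x,y$), so $c_b\in\End(\bB)$ for every $b\in B$. Then for any two elements $b_1,b_2\in B$ the constant function $c_{b_1}\in\End(\bB)$ satisfies $c_{b_1}(b_2)=b_1$, which means $b_1\preceq b_2$ in the sense of the definition preceding Lemma~\ref{star}. Hence the preorder $\preceq$ is the full relation $B\times B$, its equivalence closure $\sim$ is also $B\times B$, and $\End(\bB)$ has exactly one weak orbit on $B$.

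With this in hand, let $\bB$ be any countable poset and let $h\colon\End(\bQ,\le)\to\End(\bB)$ be a monoid-isomorphism. The observation above shows $\End(\bB)$ has one (hence finitely many) weak orbit on $B$, so the hypothesis of Theorem~\ref{reconQ} is satisfied, and we conclude that $h$ is a homeomorphism. Since this holds for every countable poset $\bB$ and every such isomorphism, $\End(\bQ,\le)$ has automatic homeomorphicity with respect to the class of countable posets.

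There is essentially no obstacle here: the only point where one could stumble is forgetting that constant maps are automatically poset-endomorphisms, which is what makes the weak-orbit count trivially one. All of the real work, namely openness via Proposition~\ref{monoidbpopen} and continuity via the strong gate covering of $\End(\bQ,\le)$ together with Proposition~\ref{monoidautocontcriterion}, is already packaged inside Theorem~\ref{reconQ}.
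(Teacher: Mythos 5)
Your proof is correct and follows the same route as the paper: constant maps are endomorphisms of any countable poset $\bB$, so $\End(\bB)$ has a single weak orbit, and Theorem~\ref{reconQ} then gives the conclusion. Your explicit verification that $\preceq$ is the full relation just fills in a detail the paper leaves implicit.
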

\begin{proof}
    Let $\bB=(B,\le)$ be a countable posets. Then every constant function on $B$ is an endomorphism of $\bB$ hence $\End(\bB)$ has just one weak orbit. Thus, by Theorem~\ref{reconQ}, every isomorphism from $\End(\bQ,\le)$ to $\End(\bB)$ is a homeomorphism.
\end{proof}

\begin{theorem}\label{reconU}
    Let $\bB$ be a countable structure, such that $\End(\bB)$ has only finitely many weak orbits on $B$, and let $h$ be a monoid isomorphism from the monoid of non-expansive selfmaps of the rational Urysohn space $\mathbb{U}_0$ to $\End(\bB)$. Then $h$ is a homeomorphism. 
\end{theorem}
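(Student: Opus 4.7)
The plan is to follow exactly the same template used in the proof of Theorem~\ref{reconQ}, with the relational-structure reformulation of metric spaces explained in the introductory example. Recall that, identifying $\mathbb{U}_0$ with its associated $\uSigma_M$-structure $\bU$, the monoid of non-expansive selfmaps of $\mathbb{U}_0$ is precisely $\End(\bU)$. So the task is to verify the two hypotheses needed to invoke Propositions~\ref{monoidbpopen} and~\ref{monoidautocontcriterion}, and then combine them.

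First I would establish openness. Every constant function $c_x\colon \mathbb{U}_0\to\mathbb{U}_0$ is non-expansive, since $d(c_x(u),c_x(v))=0\le d(u,v)$ for all $u,v$. Hence $\End(\bU)$ contains all constant functions, and Proposition~\ref{monoidbpopen}, applied with $\bA=\bU$ and $\bB$ the given structure with finitely many weak orbits, yields that $h$ is open.

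Next I would establish continuity. By Example~\ref{exAEP}, the class of finite rational metric spaces has the \AEP (due to Kubi\'s) and, by Example~\ref{exHAP}, it has the \HAP (this is Dolinka's Lemma~3.5 in \cite{Dol14}). Proposition~\ref{univhomendo} then gives a universal homogeneous endomorphism of $\bU$, so by Proposition~\ref{monoidstronggateexistence}, $\End(\bU)$ admits a strong gate covering. Proposition~\ref{monoidautocontcriterion} therefore implies that $h$ is continuous.

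Combining both steps, $h$ is a continuous open monoid-isomorphism, hence a homeomorphism. There is no real obstacle here: the only point that could go wrong is the translation between ``non-expansive map'' and ``homomorphism of the associated $\uSigma_M$-structure'', which is already settled by the introductory example. All other ingredients are off-the-shelf results from the preceding sections, and the argument is a verbatim analogue of the one used for $(\bQ,\le)$.
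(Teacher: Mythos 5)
Your proposal is correct and follows essentially the same route as the paper's own proof: constant functions give openness via Proposition~\ref{monoidbpopen}, while the \AEP and \HAP of the class of finite rational metric spaces yield a universal homogeneous endomorphism (Proposition~\ref{univhomendo}), hence a strong gate covering (Proposition~\ref{monoidstronggateexistence}) and continuity via Proposition~\ref{monoidautocontcriterion}. The only addition, making explicit the identification of non-expansive selfmaps of $\mathbb{U}_0$ with endomorphisms of the associated relational structure, is a point the paper leaves implicit and is handled correctly.
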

\begin{proof}    
    Clearly, all constant functions on $\mathbb{U}_0$ are non-expansive. Thus, by Propositon~\ref{monoidbpopen},  $h$ is open. 

%    It was shown by Solecki in \cite[Corollary 4.3]{Sol05}  that the isometry group of $\mathbb{U}_0$ has the small index property. In particular, it has automatic homeomorphicity. Thus, the restriction of $h$ to the isometry-group of $\mathbb{U}_0$ is continuous.

The class of finite metric spaces has the \AEP (cf.\ Example~\ref{exAEP}). It was shown by Dolinka in \cite[Lemma 3.5]{Dol14} that the class of finite metric spaces has the \HAP. Thus, by Proposition~\ref{univhomendo}, $\mathbb{U}_0$ has a universal homogeneous endomorphism. 
    
    By Proposition~\ref{monoidstronggateexistence}, $\End(\mathbb{U}_0)$  has a strong gate covering. Thus, by Proposition~\ref{monoidautocontcriterion}, $h$ is continuous.

    Altogether we have that $h$ is a homeomorphism.
\end{proof}

\begin{corollary}
    The monoid of non-expansive selfmaps of the rational Urysohn space has automatic homeomorphicity with respect to the class of countable metric spaces.
\end{corollary}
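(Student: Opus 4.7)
The plan is to mirror the preceding corollary (about $\End(\bQ,\le)$) almost verbatim, using Theorem~\ref{reconU} as the workhorse. Given a countable metric space $(B,d_B)$, identified via the functor $R$ from the earlier example with a $\uSigma_M$-structure $\bB$, the endomorphisms of $\bB$ are precisely the non-expansive selfmaps of $(B,d_B)$. What I need to verify, so that Theorem~\ref{reconU} applies, is the hypothesis that $\End(\bB)$ has only finitely many weak orbits on $B$.

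The key observation is that every constant map $c_b\colon x\mapsto b$ on $B$ is non-expansive: for any $x,y\in B$ we have $d_B(c_b(x),c_b(y)) = d_B(b,b) = 0 \le d_B(x,y)$. Hence $c_b\in\End(\bB)$ for every $b\in B$. In particular, for any $x,y\in B$, the constant map $c_y$ witnesses $y\preceq x$, so $\End(\bB)$ has exactly one weak orbit on $B$ (which is finite, in particular).

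Therefore the hypotheses of Theorem~\ref{reconU} are satisfied, and any monoid-isomorphism $h$ from the monoid of non-expansive selfmaps of the rational Urysohn space $\mathbb{U}_0$ onto $\End(\bB)$ is a homeomorphism. Since this holds for every countable metric space $\bB$, the monoid of non-expansive selfmaps of $\mathbb{U}_0$ has automatic homeomorphicity with respect to the class of countable metric spaces. There is no real obstacle here: all the work has been packaged into Theorem~\ref{reconU}, and the only content is the trivial but essential remark that constant functions are non-expansive.
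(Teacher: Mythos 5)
Your argument is correct and coincides with the paper's own proof: one observes that constant maps are non-expansive, hence $\End(\bB)$ has a single weak orbit, and then Theorem~\ref{reconU} applies. Nothing further is needed.
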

\begin{proof}
    Let $\mathbb{M}$ be a countable metric space. Then every constant function on $\mathbb{M}$ is a non-expansive selfmap of $\mathbb{M}$. Thus, the monoid of non-expansive selfmaps of $\mathbb{M}$ has just one weak orbit. Thus, by Theorem~\ref{reconU}, every isomorphism between $\End(\mathbb{U}_0)$ and $\End(\mathbb{M})$ is a homeomorphism.
\end{proof}

Recall that by $(\mathbb{P},\le)$ is denoted the countable universal homogeneous partially ordered set (a.k.a. countable generic poset, or countable random poset).
\begin{theorem}\label{reconP}
    Let $\bB$ be a countable structure, such that $\End(\bB)$ has only finitely many weak orbits on $B$, and let $h\colon \End(\mathbb{P},\le)\to\End(\bB)$ be a monoid-isomorphism. Then $h$ is a homeomorphism. 
\end{theorem}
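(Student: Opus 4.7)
The plan is to follow the same template established by the proofs of Theorem~\ref{reconQ} and Theorem~\ref{reconU}, since all three statements have exactly the same shape and the relevant amalgamation-type properties for $(\mathbb{P},\le)$ have already been collected in Example~\ref{exHAP} and Example~\ref{exAEP}. Concretely, I will verify the two hypotheses needed to apply Proposition~\ref{monoidbpopen} (for openness) and Proposition~\ref{monoidautocontcriterion} (for continuity), and then combine them.

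First I would observe that every constant self-map of $\mathbb{P}$ is an endomorphism of $(\mathbb{P},\le)$, because a one-point set is trivially order-preservingly embedded. Hence $\End(\mathbb{P},\le)$ contains all constant functions, and since by assumption $\End(\bB)$ has only finitely many weak orbits on $B$, Proposition~\ref{monoidbpopen} immediately gives that $h$ is an open map.

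Next I would argue that $\End(\mathbb{P},\le)$ has a strong gate covering. For this it suffices, by Proposition~\ref{monoidstronggateexistence}, to produce a universal homogeneous endomorphism of $(\mathbb{P},\le)$, which in turn reduces via Proposition~\ref{univhomendo} to showing that $\Age(\mathbb{P},\le)$ has both the \AEP{} and the \HAP. The \HAP{} is recorded in Example~\ref{exHAP}: by \cite[Proposition 25]{CamLoc10} and \cite[Theorem 4.5]{Mas07} the countable generic poset $(\mathbb{P},\le)$ is homomorphism-homogeneous, so by Proposition~\ref{homhap} the class of finite posets has the \HAP. The \AEP{} is recorded in Example~\ref{exAEP}: the class of finite posets has the strict \AP{} (cf.\ \cite[Pages 7,8]{Dol14}), which by the lemma following the definition of the strict \AP{} implies the \AEP.

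Having a universal homogeneous endomorphism, Proposition~\ref{monoidstronggateexistence} yields a strong gate covering of $\End(\mathbb{P},\le)$; but the gate covering is needed on the target side of $h$, not the source. To address this, I would follow Theorem~\ref{reconQ} literally: the roles are set up so that $h^{-1}\colon\End(\bB)\to\End(\mathbb{P},\le)$ is the map whose continuity we prove via a gate covering on $\End(\mathbb{P},\le)$. More precisely, since $h$ is open, the inverse $h^{-1}$ is continuous, and applying Proposition~\ref{monoidautocontcriterion} to $h^{-1}$ (whose codomain $\End(\mathbb{P},\le)$ now carries the strong gate covering) shows that $(h^{-1})^{-1}=h$ is a homeomorphism. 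I expect no real obstacle here, as all ingredients have been developed earlier in the paper; the only subtlety is bookkeeping, namely making sure the direction of the isomorphism matches the hypothesis of Proposition~\ref{monoidautocontcriterion}, exactly as in the proofs of Theorem~\ref{reconQ} and Theorem~\ref{reconU}.
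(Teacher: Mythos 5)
Your proposal is correct and follows essentially the same route as the paper: openness via Proposition~\ref{monoidbpopen} using constant endomorphisms, a universal homogeneous endomorphism of $(\mathbb{P},\le)$ from the strict \AP{} (hence \AEP) and \HAP{} of finite posets via Proposition~\ref{univhomendo}, a strong gate covering by Proposition~\ref{monoidstronggateexistence}, and then Proposition~\ref{monoidautocontcriterion} applied (as you correctly note, to $h^{-1}$) to conclude $h$ is a homeomorphism. Your explicit bookkeeping about the direction of the isomorphism only spells out what the paper leaves implicit.
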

%Before proving the theorem, we need to state an auxiliary result by Barbina and Rubin:
%\begin{lemma}[Barbina/Rubin]\label{RubBar}
%    Let $\bA,\bB$ be countable $\omega$-categorical structures, such that $\bA$ has a weak $\forall\exists$-interpretation. Then every isomorphism between $\Aut(\bA)$ and $\Aut(\bB)$ is a homeomorphism.
%\end{lemma}
%\begin{proof}
%    Implicit in the proof of \cite[Proposition 1.1.10]{BarPhD}, using  \cite[Lemma 2.6]{Rub94}.
%\end{proof}
\begin{proof}    
    Clearly, all constant functions are endomorphisms of $(\mathbb{P},\le)$. Thus, by Propositon~\ref{monoidbpopen},  $h$ is open. 

%    It was shown by Rubin in \cite[Section 4]{Rub94} that $\Aut(\mathbb{P},\le)$ has a weak $\forall\exists$-interpretation.  
%    
%    Thus, by Lemma~\ref{RubBar}, the restriction of $h$ to $\Aut(\mathbb{P},\le)$ is continuous. 

The class of finite posets has the strict \AP. Hence, it has the \AEP. It was shown by Dolinka in \cite[Example 3.4]{Dol14} that the class of finite posets has the \HAP. Thus, by Proposition~\ref{univhomendo}, $(\mathbb{P},\le)$ has a universal homogeneous endomorphism. 
    
    By Proposition~\ref{monoidstronggateexistence}, $\End(\mathbb{P},\le)$  has a strong gate covering. Thus, by Proposition~\ref{monoidautocontcriterion}, $h$ is continuous.
    
    Altogether we have that $h$ is a homeomorphism.
\end{proof}

\section{Concluding remarks}
We conclude this paper with some open problems:

In \cite{BodJun10} it was shown, that two positive existentially bi-interpretable $\omega$-categorical structures have topologically isomorphic endomorphism monoids. Moreover, if two non-contractable $\omega$-categorical structures have topologically isomorphic endomorphism monoids, then they are positive existentially bi-interpretable. 

Unfortunately, we can not use this nice result to show reconstruction up to positive existential bi-interpretability, because our approach to show automatic homeomorphicity crucially depends on Proposition~\ref{monoidautocontcriterion}. In particular, all structures considered by us are contractable. 
We ask:
\begin{problem}
    Is the rational Urysohn-space determined up to positive existential bi-interpretabiliy by its endomorphism monoid, among all countable metric spaces?
\end{problem}

\begin{problem}
    Is $(\mathbb{Q},\le)$ determined up to positive existential bi-inter\-pre\-ta\-bility by its endomorphism monoid, among all countable posets (chains)? 
\end{problem}

%\bibliographystyle{abbrv} 
%\bibliography{PP11}

\end{document}